\newtheorem{theorem}{Theorem}[section]
\newtheorem{lemma}{Lemma}[section]
\newtheorem{proposition}{Proposition}[section]
\newtheorem{corollary}{Corollary}[section]
\newtheorem{remark}{Remark}[section]
\newtheorem{conjecture}{Conjecture}[section]
\newtheorem{problem}{Problem}[section]
\begin{document}
\title[Hypersurfaces with semi-parallel cubic form]
{Conformally flat affine hypersurfaces with semi-parallel cubic form}

\author{Huiyang Xu}
\address{
School of Mathematics and Statistics, Henan University of Science
and Technology, Luoyang 471023, People's Republic of China.}
\email{xuhuiyang@haust.edu.cn}

\author{Cece Li}
\address{
School of Mathematics and Statistics, Henan University of Science
and Technology, Luoyang 471023, People's Republic of China.}
\email{ceceli@haust.edu.cn}

\thanks{2010 {\it Mathematics Subject Classification.}
Primary 53A15; Secondary 53C24, 53C42.}

\thanks{Conflict of interest.
The authors have no conflict of interest.}

\thanks{This work was supported by NNSF of China
(Nos. 12101194, 11401173).}

\date{}

\keywords{Affine hypersurface, semi-parallel cubic form, Levi-Civita connection,
conformally flat, warped product.}

\begin{abstract}
In this paper, we study locally strongly convex affine hypersurfaces
with vanishing Weyl curvature tensor and semi-parallel cubic form relative
to the Levi-Civita connection of affine metric. 
As the main result, we classify such hypersurfaces being not of flat affine metric. In particular,
$2, 3$-dimensional locally strongly convex affine hypersurfaces with
semi-parallel cubic form are completely determined.
\end{abstract}

\maketitle
\numberwithin{equation}{section}

\section{Introduction}\label{sect:1}
The classical equiaffine differential geometry is mainly concerned with geometric
properties and invariants of hypersurfaces in affine space, that are invariant under
unimodular affine transformations. Let $\mathbb{R}^{n+1}$ be the $(n+1)$-dimensional
real unimodular affine space. On any non-degenerate hypersurface immersion of
$\mathbb{R}^{n+1}$, it is well known how to induce the affine connection $\nabla$,
the affine shape operator $S$ whose eigenvalues are called affine principal curvatures,
and the affine metric $h$. The classical Pick-Berwald theorem states that
the cubic form $C:=\nabla h$ vanishes, if and only if the hypersurface is a
non-degenerate hyperquadric. In that sense the cubic form plays the role as
the second fundamental form for submanifolds of real space forms.

In the past decades, similar to the Pick-Berwald theorem, geometric conditions on
the cubic form have been used to classify natural classes of affine hypersurfaces
by many geometers, see e.g. \cite{BD,BNS,BV,DV1,G,H,HLLV1,HLLV2,O,W}.
Among them, one of the most interesting developments may be the
classification of locally strongly convex affine hypersurfaces with $\hat\nabla C=0$,
where $\hat\nabla$ is the Levi-Civita connection of affine metric.
In this subject, F. Dillen, L. Vrancken, et al. obtain the classifications for lower
dimensions in \cite{DV,DVY,HLSV,MN}, and finally Z. Hu, H. Li and L. Vrancken
complete the classification for all dimensions as follows.

\begin{theorem}[cf. \cite{HLV}]\label{thm:1.1}
Let $M$ be an $n$-dimensional ($n\geq2$) locally strongly convex affine hypersurface in
$\mathbb{R}^{n+1}$ with $\hat\nabla C=0$. Then $M$ is either a
hyperquadric (i.e., $C=0$) or a hyperbolic affine hypersphere with $C\not=0$;
in the latter case either
\begin{enumerate}

\item[(i)] $M$ is obtained as the Calabi product of a lower
dimensional hyperbolic affine hypersphere with parallel cubic form and a
point, or

\item[(ii)] $M$ is obtained as the Calabi product of two lower
dimensional hyperbolic affine hyperspheres with parallel cubic form, or

\item[(iii)] $n=\tfrac12m(m+1)-1,\ m\ge3$, $(M,h)$ is isometric
to ${\bf SL}(m,\mathbb{R})/{\bf SO}(m)$, and $M$ is affinely
equivalent to the standard embedding ${\bf SL}(m,\mathbb{R})/{\bf
SO}(m)\hookrightarrow\mathbb{R}^{n+1}$, or

\item[(iv)] $n=m^2-1,\ m\ge3$, $(M,h)$ is isometric to
${\bf SL}(m,\mathbb{C})/{\bf SU}(m)$, and $M$ is affinely equivalent
to the standard embedding ${\bf SL}(m,\mathbb{C})/{\bf
SU}(m)\hookrightarrow\mathbb{R}^{n+1}$, or

\item[(v)] $n=2m^2-m-1,\ m\ge3$, $(M,h)$ is isometric to
${\bf SU}^*\big(2m\big)/{\bf Sp}(m)$, and $M$ is affinely equivalent
to the standard embedding ${\bf SU}^*\big(2m\big)/{\bf
Sp}(m)\hookrightarrow\mathbb{R}^{n+1}$, or

\item[(vi)] $n=26$, $(M,h)$ is isometric to
${\bf E}_{6(-26)}/{\bf F}_4$, and $M$ is affinely equivalent to the
standard embedding ${\bf E}_{6(-26)}/{\bf F}_4\hookrightarrow
\mathbb{R}^{27}$.
\end{enumerate}
\end{theorem}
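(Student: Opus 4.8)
The plan is to encode the hypothesis through the difference tensor $K:=\nabla-\hat\nabla$, which on a locally strongly convex hypersurface is a totally $h$-symmetric $(1,2)$-tensor with $h(K(X,Y),Z)=-\tfrac12\,C(X,Y,Z)$, whose endomorphisms $K_XY:=K(X,Y)$ are $h$-self-adjoint and trace-free (apolarity). Since $C=\nabla h$, the condition $\hat\nabla C=0$ is equivalent to $\hat\nabla K=0$, so in particular the Pick invariant $J=\tfrac{1}{n(n-1)}h(K,K)$ is constant. \textbf{Step 1 (reduction to a hyperbolic affine hypersphere).} If $J=0$ then $C=0$ and the Pick--Berwald theorem gives a hyperquadric. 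If $J>0$ (so $C\neq0$), a standard argument combining the Codazzi equations for $C$ and for the affine shape operator $S$ with apolarity and with the algebraic rigidity forced by $\hat\nabla K=0$ yields $\hat\nabla S=0$ and then $S=H\,\mathrm{id}$ with $H$ constant; a Calabi-type maximum-principle (or completeness) argument excludes $H\ge0$, so $M$ is a hyperbolic affine hypersphere. The Gauss equation of the affine metric then reads $\hat R(X,Y)Z=H\{h(Y,Z)X-h(X,Z)Y\}-[K_X,K_Y]Z$, and together with $\hat\nabla S=0$ and $\hat\nabla K=0$ this gives $\hat\nabla\hat R=0$, so $(M,h)$ is a Riemannian locally symmetric space.

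\textbf{Step 2 (Calabi reduction).} Fix $p\in M$ and view $T_pM$ as the commutative algebra $(T_pM,\ast)$, $X\ast Y:=K(X,Y)$, with inner product $h$. Choose a unit vector $e$ maximizing the cubic function $X\mapsto h(K(X,X),X)$ over the unit sphere of $T_pM$; the first- and second-order optimality conditions force $e$ to be an eigenvector of $K_e$ and restrict the remaining spectrum of $K_e$ to at most two more eigenvalues. Extending $e$ to the $\hat\nabla$-parallel unit field it generates, the associated eigendistributions are parallel, and if the resulting splitting is nontrivial the structure equations identify $M$ as a Calabi product: either of a lower-dimensional hyperbolic affine hypersphere with parallel cubic form and a point (case (i)), or of two such hyperspheres (case (ii)). Since the Calabi construction preserves the class under study, these cases close by induction on $n$, which simultaneously delivers the low-dimensional classifications announced in the abstract. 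Making this reduction globally rigorous — ruling out a merely local product structure and checking the base cases of the induction — is the first technical hurdle.

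\textbf{Step 3 (the indecomposable case, and the main obstacle).} It remains to treat the case in which no such splitting occurs; then the restricted holonomy representation on $T_pM$ is irreducible and admits the nonzero parallel totally symmetric cubic form $C$ subject to the Gauss constraint of Step 1. Running through the Cartan/Berger classification of irreducible Riemannian symmetric spaces and testing, for each, whether a parallel cubic form compatible with that Gauss equation can exist is the heart of the argument: it pins down $(M,h)$, up to homothety, as one of the symmetric-cone models $\mathbf{SL}(m,\mathbb{R})/\mathbf{SO}(m)$, $\mathbf{SL}(m,\mathbb{C})/\mathbf{SU}(m)$, $\mathbf{SU}^*(2m)/\mathbf{Sp}(m)$ with $m\ge3$, or $\mathbf{E}_{6(-26)}/\mathbf{F}_4$ — these being the unit level sets of the determinant (norm form) in the simple Euclidean Jordan algebras $\mathrm{Sym}(m,\mathbb{R})$, $\mathrm{Herm}(m,\mathbb{C})$, $\mathrm{Herm}(m,\mathbb{H})$, $\mathrm{Herm}(3,\mathbb{O})$, with $K$ equal, up to sign, to the trace-free part of the Jordan product. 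Once the symmetric space and the normal form of $K$ are known, the fundamental existence-and-uniqueness theorem for affine hypersurfaces shows $M$ is determined up to affine equivalence by $(h,K)$ together with $S=H\,\mathrm{id}$, hence $M$ is affinely equivalent to the stated standard embedding. I expect the genuine difficulty to lie precisely in this step: isolating exactly those four families among all irreducible symmetric spaces, and at the same time identifying the invariant cubic form with the Jordan multiplication, which is where the representation theory carries the whole load.
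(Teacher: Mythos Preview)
The paper does not give its own proof of Theorem~\ref{thm:1.1}; it is quoted verbatim as a known result from \cite{HLV} and used only as background and motivation. So there is no ``paper's own proof'' to compare against here.

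That said, your outline is broadly the strategy of \cite{HLV}, but Step~1 contains a real mistake. The theorem is purely local: there is no completeness hypothesis, so a ``Calabi-type maximum-principle (or completeness) argument'' is not available to exclude $H\ge0$. What actually happens is this. From $\hat\nabla K=0$ the Codazzi equation \eqref{eqn:2.6} forces its right-hand side to vanish; tracing gives $S=H\,\mathrm{id}$ directly, with $H$ constant. The sign $H<0$ when $C\neq0$ is then obtained \emph{algebraically and pointwise}: for the unit vector $e$ maximizing $h(K(X,X),X)$, the first- and second-order optimality conditions bound the eigenvalues of $K_e$ (at most one positive eigenvalue $\lambda_1$, and every other eigenvalue $\le\tfrac12\lambda_1$), and feeding this into the Gauss equation \eqref{eqn:2.9} for the sectional curvatures through $e$ forces $H<0$. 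No global input is needed. Your Steps~2 and~3 are a fair high-level summary of the Calabi-product reduction and the irreducible symmetric-space/Jordan-algebra analysis carried out in \cite{HLV}, though you should be aware that in \cite{HLV} the hard work in Step~3 is not literally ``running through Cartan/Berger'': it proceeds by a detailed normal-form analysis of the parallel cubic $K$ under the constraints coming from apolarity and the Gauss equation, which ultimately identifies the Jordan-algebra structure.
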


As that did in \cite{HLV, HX}, we say that an affine hypersurface has semi-parallel
(resp. parallel) cubic form relative to the Levi-Civita connection of affine metric
if $\hat{R}\cdot C=0$ (resp. $\hat\nabla C=0$), where $\hat{R}$ is the
curvature tensor of affine metric, and the tensor $\hat{R}\cdot C$ is defined by
\begin{equation}\label{eqn:1.1}
\begin{array}{lll}
\hat{R}(X,Y)\cdot C=\hat\nabla_X\hat\nabla_YC-\hat\nabla_Y\hat\nabla_XC-\hat\nabla_{[X,Y]}C
\end{array}
\end{equation}
for tangent vector fields $X,Y$. Obviously, the parallelism of cubic form implies
its semi-parallelism, the converse is not true, we refer to
Remark \ref{rem:3.1} for the counter-examples.

As a generalization of Theorem \ref{thm:1.1}, naturally one may ask for the following problem:
\begin{problem}\label{prob:1.1}
Classify all the $n$-dimensional locally strongly convex affine hypersurfaces
with $\hat{R}\cdot C=0$.
\end{problem}

Related to Problem \ref{prob:1.1}, Z. Hu and C. Xing \cite{HX} show that
such surface is either a quadric, or a flat affine surface. Recently, the present
authors and C. Xing \cite{LXX} give an answer to Problem \ref{prob:1.1}
in two cases for such hypersurfaces with at most one affine principal curvature
of multiplicity one: being not affine hyperspheres, being affine hyperspheres
with constant scalar curvature. Based on the latter, they pose the following

\begin{conjecture}\label{conj:1.1}
For any locally strongly convex affine hypersphere of dimension $n$,
the semi-parallelism and parallelism of cubic form are equivalent,
i.e., $\hat{R}\cdot C=0$ iff $\hat\nabla C=0$.
\end{conjecture}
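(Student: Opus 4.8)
The plan is to work with the difference tensor $K$, defined by $h(K_XY,Z)=C(X,Y,Z)$, so that each $K_X$ is $h$-symmetric and $C$ is totally symmetric; for an affine hypersphere one has $S=H\,\mathrm{id}$ with $H$ constant, and the Gauss equation expresses the curvature algebraically as $\hat R(X,Y)Z=H\{h(Y,Z)X-h(X,Z)Y\}-[K_X,K_Y]Z$, while the Codazzi equation says $\hat\nabla C$ is totally symmetric and apolarity gives $\sum_i C(e_i,e_i,\cdot)=0$. I would first record the purely tensorial identity, valid for every affine hypersphere, that $(\hat\Delta C)(e_j,e_k,e_l)=\sum_i(\hat R(e_i,e_j)\cdot C)(e_i,e_k,e_l)$: this follows by writing $\hat\Delta C=\mathrm{tr}\,\hat\nabla^2C$, using total symmetry of $\hat\nabla C$ to move a derivative index into a tensor slot, commuting the two covariant derivatives at the cost of one curvature term, and killing the surviving term because apolarity (together with $\hat\nabla h=0$) forces $\sum_i(\hat\nabla C)(e_i,e_i,\cdot,\cdot)=0$. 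Under the hypothesis $\hat R\cdot C=0$ the right-hand side vanishes, so $\hat\Delta C=0$; feeding this into the Weitzenb\"ock identity $\tfrac12\hat\Delta\|C\|^2=\|\hat\nabla C\|^2+\langle\hat\Delta C,C\rangle$ gives $\hat\Delta\|C\|^2=2\|\hat\nabla C\|^2\ge0$. Since for an affine hypersphere the shape operator has a single eigenvalue of multiplicity $n\ge2$, the multiplicity hypothesis of \cite{LXX} is automatic, and its constant-scalar-curvature result applies; thus this step reduces the conjecture to proving that semi-parallelism forces $\|C\|^2$ (equivalently the Pick invariant, equivalently the scalar curvature) to be locally constant.

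Second, I would upgrade the symmetries one order. Semi-parallelism says precisely that $\hat\nabla^2C$ is symmetric in its first two slots; combined with the Codazzi symmetry of $\hat\nabla C$ this makes $\hat\nabla^2C$ a totally symmetric $5$-tensor. Hence $D:=\hat\nabla C$ formally behaves like $C$ did: it is totally symmetric, its covariant derivative is totally symmetric, and $\sum_i D(e_i,e_i,\cdot,\cdot)=\hat\Delta C=0$ plays the role of apolarity. Running the same computation on $D$ yields $\hat\Delta D=\sum_i\hat R(e_i,\cdot)\cdot D$, and substituting the Gauss expression for $\hat R$ turns this into an identity algebraic in $H$, $K$ and $\hat\nabla C$. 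The Weitzenb\"ock identity then reads $\tfrac12\hat\Delta\|\hat\nabla C\|^2=\|\hat\nabla^2C\|^2+\mathcal Q$, where $\mathcal Q=a(n)\,H\|\hat\nabla C\|^2+\mathcal Q_K$ with $a(n)>0$ and $\mathcal Q_K$ a contraction of $[K,K]$ against $\hat\nabla C\otimes\hat\nabla C$. The aim is to exhibit, pointwise, a sign/structure for $\mathcal Q$ that, together with the two subharmonicity relations above, forces $\hat\nabla C\equiv0$.

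Third, for the genuinely general (not conformally flat) case I would attack the remaining step algebraically at each point. The condition $\hat R(X,Y)\cdot C=0$ for all $X,Y$, rewritten through the Gauss equation, is a rigid constraint coupling the constant $H$ to the commutators $[K_X,K_Y]$ acting as derivations on the totally symmetric $C$. I would decompose $T_pM$ by the common algebraic structure of the symmetric family $\{K_X\}$ — for instance via the eigendistributions of the Ricci-type operator $\sum_iK_{e_i}^2$ — and show that semi-parallelism forces the off-block components of $K$ to vanish and each eigenvalue to be constant. This should identify the eigendistributions as autoparallel, producing a local warped-product decomposition along which one reduces the dimension and invokes Theorem \ref{thm:1.1} on the parallel factors, with the surface case \cite{HX} and the present paper's classification as base cases of an induction; on each irreducible factor $K$ carries the Jordan-triple structure attached to the symmetric spaces of Theorem \ref{thm:1.1}, and the task is to show that semi-parallelism reproduces exactly that structure, whence $\hat\nabla C=0$.

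The main obstacle is precisely this pointwise decomposition in the absence of the vanishing-Weyl hypothesis. When $M$ is conformally flat the curvature is governed by its Ricci part, the eigendistributions of $K$ are automatically integrable, and the warped-product splitting is forced; without that hypothesis the term $[K_X,K_Y]$ can mix the eigendistributions and $\mathcal Q_K$ is not sign-definite, so neither the Bochner estimate nor a naive splitting closes by itself. I expect the crux to be showing that the constancy of $H$ — the affine-hypersphere hypothesis, which is exactly what makes $\hat R$ algebraically expressible through $H$ and $K$ alone — together with the full tensorial force of $\hat R\cdot C=0$ still forbids such mixing, most plausibly by proving that the subalgebra generated by $\{K_X\}$ must be reductive and match the parallel models, so that the semi-parallel and parallel conditions coincide.
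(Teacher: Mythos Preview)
The statement you are attempting is Conjecture~\ref{conj:1.1}, which the paper does \emph{not} prove in general: it is posed as an open problem. The paper confirms it only under the additional hypothesis of vanishing Weyl curvature tensor (Theorem~\ref{thm:1.2}) and, via Remark~\ref{rem:1.1}, records that it is also known for $n\le 4$ and for affine hyperspheres of constant scalar curvature \cite{LXX}. So there is no ``paper's own proof'' of the full conjecture to compare with; you are attempting something strictly stronger than what the paper achieves.

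Your first step is correct and recovers the known reduction: the identity $\hat\Delta C=\sum_i\hat R(e_i,\cdot)\cdot C$ holds for any affine hypersphere (Codazzi symmetry of $\hat\nabla C$ plus apolarity), and under $\hat R\cdot C=0$ it gives $\hat\Delta C=0$, whence $\tfrac12\hat\Delta\|C\|^2=\|\hat\nabla C\|^2\ge 0$. Since $\chi=H+J$ with $H$ constant, constancy of $\|C\|^2$ is equivalent to constancy of the scalar curvature, and then \cite{LXX} finishes. The gap, which you yourself name, is that subharmonicity of $\|C\|^2$ on an a~priori non-complete, non-compact manifold does not yield local constancy. Your second and third paragraphs are a programme (a second Bochner step on $D=\hat\nabla C$, then a pointwise algebraic splitting of $T_pM$ via the family $\{K_X\}$), not a proof: the curvature term $\mathcal Q_K$ built from $[K_X,K_Y]$ acting on $\hat\nabla C\otimes\hat\nabla C$ has no sign in general, and you give no mechanism that forces the eigendistributions of $\sum_iK_{e_i}^2$ to be autoparallel absent conformal flatness. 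This obstruction is exactly why the conjecture remains open; the known cases where $J$ is forced to be constant use either a pointwise algebraic rigidity in low dimension (for $n=4$, $J$ is confined to finitely many values at each point, cf.\ Remark~\ref{rem:1.1}(3)) or an extra global hypothesis.

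For comparison, in the conformally flat case that the paper \emph{does} settle, the method is quite different from yours. It analyses directly the eigenstructure of the Schouten tensor $P$ under $\hat R\cdot C=0$ and $W=0$: one shows $P$ has at most two distinct eigenvalues with $\nu_2=-\nu_1$, one of them simple (Lemmas~\ref{lem:3.1}--\ref{lem:3.3}), derives explicit expressions for $H$, $J$, $r$ and the traceless Ricci tensor in terms of a single scalar $\lambda_2$, and then observes these hit exactly the equality case of the inequality of \cite{CH} (Theorem~\ref{thm:2.1}), which characterises the hypersurfaces \eqref{eqn:1.2} and \eqref{eqn:1.3}. No Bochner--Weitzenb\"ock estimate enters.
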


In this paper, we continue to study locally strongly convex affine hypersurfaces
with $\hat{R}\cdot C=0$, and pay our attention to the case: Weyl curvature tensor
vanishes identically. First, by investigating such affine hypersphere
we can confirm Conjecture \ref{conj:1.1} in this case.

\begin{theorem}\label{thm:1.2}
Let $M^n$, $n\geq3$, be a locally strongly convex affine hypersphere in $\mathbb{R}^{n+1}$
with $\hat{R}\cdot C=0$ and vanishing Weyl curvature tensor.
Then $M^n$ is affinely equivalent to either a hyperquadric, or
the flat and hyperbolic affine hypersphere:
\begin{align}
x_1x_2\cdots x_{n+1}=1,\label{eqn:1.2}
\end{align}
or the hyperbolic affine hypersphere with quasi-Einstein affine metric:
\begin{align}
(x_{n}^2-x_1^2-\cdots-x_{n-1}^2)^nx_{n+1}^2=1,\label{eqn:1.3}
\end{align}
where $(x_1,\ldots,x_{n+1})$ are the standard coordinates of $\mathbb{R}^{n+1}$.
\end{theorem}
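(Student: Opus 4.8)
The plan is to combine the structural equations of an affine hypersphere with the two hypotheses—$\hat R\cdot C=0$ and vanishing Weyl tensor—to force the Ricci tensor of $(M,h)$ into a very rigid algebraic form, and then to identify the resulting metric. First I would recall that for an affine hypersphere the affine shape operator is $S=\frac{J}{n}\,\mathrm{id}$ for a constant $J$, so the Gauss equation reduces to
\begin{equation*}
\hat R(X,Y)Z = \tfrac{J}{n}\bigl(h(Y,Z)X-h(X,Z)Y\bigr) - [\hat A_X,\hat A_Y]Z,
\end{equation*}
where $\hat A$ is the difference tensor ($C(X,Y,Z)=-2h(\hat A_XY,Z)$). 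The vanishing of the Weyl tensor then translates, via this Gauss equation, into a pointwise algebraic identity relating the symmetric bilinear form $X\mapsto \operatorname{tr}(\hat A_X\hat A_Y)$ (essentially the ``quadratic'' part of the cubic form, the so-called Tchebychev-type tensor) and the metric $h$; concretely it should say that the Ricci tensor of $h$ has at most two distinct eigenvalues, one of which may be of multiplicity one. This is exactly the regime in which warped-product / quasi-Einstein arguments apply.

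Next I would bring in $\hat R\cdot C=0$. Since $C=-2h(\hat A\cdot,\cdot)$ and $\hat\nabla h=0$, the condition $\hat R(X,Y)\cdot C=0$ is equivalent to $\hat R(X,Y)\cdot\hat A=0$, i.e.\ $[\hat R(X,Y),\hat A_Z]=\hat A_{\hat R(X,Y)Z}$ for all $X,Y,Z$. Substituting the Gauss-equation expression for $\hat R$ turns this into a purely algebraic system in $\hat A$ at each point. Combined with the two-eigenvalue restriction from the Weyl hypothesis, I expect this system to be stringent enough to conclude that, after diagonalizing, $\hat A$ is ``diagonal'' with respect to an $h$-orthonormal frame and its eigenvalues take only two values, matched to the eigenspace decomposition coming from the Ricci tensor. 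At that stage one is essentially looking at an affine hypersphere whose difference tensor has the same isotropy structure as in cases (i)–(ii) of Theorem~\ref{thm:1.1}, but now only semi-parallel. I would then argue that semi-parallelism plus this rigidity forces the relevant derivatives of the eigenvalues to vanish, so in fact $\hat\nabla C=0$—this proves Conjecture~\ref{conj:1.1} in the present case and reduces us to Theorem~\ref{thm:1.1}. Finally, scanning the list in Theorem~\ref{thm:1.1} for the members with vanishing Weyl tensor leaves only the hyperquadrics, the iterated Calabi product of one-dimensional factors (which is $x_1\cdots x_{n+1}=1$), and one Calabi product of a point with a lower-dimensional hyperbolic affine hypersphere of constant curvature, which is \eqref{eqn:1.3}; this yields the stated trichotomy.

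The main obstacle, I expect, is the middle step: extracting from the algebraic identities $\hat R\cdot\hat A=0$ and $W=0$ the conclusion that $\hat A$ is simultaneously diagonalizable with only two eigenvalues, \emph{and} that these eigenvalues are (locally) constant. Handling the multiplicity-one eigenspace separately from the higher-multiplicity one—and ruling out the possibility that $\hat A$ restricted to an eigendistribution is non-trivially ``rotating''—is the delicate part, and it is presumably where one needs a careful Codazzi-type computation together with the Bianchi identity to upgrade semi-parallelism to parallelism. Once that is done, the classification part is just a bookkeeping check against the known list, and the explicit models \eqref{eqn:1.2} and \eqref{eqn:1.3} follow from the standard realizations of Calabi products.
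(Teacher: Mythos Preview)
Your structural outline---showing the Schouten tensor has at most two eigenvalues and that the difference tensor takes the rigid ``two-block'' form in~\eqref{eqn:3.18}---matches the paper's Lemmas~\ref{lem:3.1}--\ref{lem:3.3}. (One notational slip: you write $S=\tfrac{J}{n}\,\mathrm{id}$, but $J$ is the Pick invariant here; the constant is the affine mean curvature $H$.)

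Where your route diverges is the endgame. You propose to upgrade $\hat R\cdot C=0$ to $\hat\nabla C=0$ via ``a careful Codazzi-type computation together with the Bianchi identity,'' and then filter Theorem~\ref{thm:1.1} for the $W=0$ members. The paper does \emph{not} do this, and in fact your ``main obstacle'' evaporates for affine hyperspheres: once you have the relations in Lemma~\ref{lem:3.3}, the hypersphere condition $\mu_1=\mu_2=H$ with $H$ constant forces $H=-n\lambda_2^2$ and hence $\lambda_2,\nu_2,r,J$ are all constant \emph{immediately}---no Codazzi or Bianchi argument is needed. For $m=1$ the paper uses Proposition~\ref{prop:3.1} (constant curvature $+$ semi-parallel $\Rightarrow$ hyperquadric or flat) together with the Vrancken--Li--Simon classification of constant-curvature affine spheres, not Theorem~\ref{thm:1.1}. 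For $m=2$ the paper computes the norm of the traceless Ricci tensor from~\eqref{eqn:3.19} and~\eqref{eqn:3.22}, checks that it realizes the equality $\|\tilde{Ric}\|_h^2=-\tfrac{(n+1)(n-2)}{n+2}Jr$ in Theorem~\ref{thm:2.1} (Cheng--Hu), and that theorem directly identifies $M^n$ as~\eqref{eqn:1.3}.

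So your plan would work in principle, but it introduces a difficulty (proving constancy) that the hypersphere hypothesis renders trivial, and it takes a longer detour through Theorem~\ref{thm:1.1} where the paper instead lands on a sharp inequality whose equality case is already characterized. The payoff of the paper's approach is that it never has to prove $\hat\nabla C=0$ by hand; the payoff of yours, if carried out, would be a self-contained argument not relying on Theorem~\ref{thm:2.1}, at the cost of the extra differential analysis you flagged.
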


From Theorem \ref{thm:1.2} and the fact that Weyl curvature tensor
vanishes automatically for $n=3$ we obtain immediately:
\begin{corollary}\label{cor:1.1}
Let $M^3$ be a locally strongly convex affine hypersphere in $\mathbb{R}^{4}$
with $\hat{R}\cdot C=0$.
Then $M^3$ is affinely equivalent to either a hyperquadric, or one of
the two hyperbolic affine hyperspheres:
\begin{align*}
&x_1x_2x_3x_4=1,\\[2mm]
&(x_3^2-x_1^2-x_2^2)^3x_4^2=1,
\end{align*}
where $(x_1,\ldots,x_{4})$ are the standard coordinates of $\mathbb{R}^{4}$.
\end{corollary}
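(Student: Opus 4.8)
The plan is to obtain Corollary~\ref{cor:1.1} as a direct specialization of Theorem~\ref{thm:1.2} to dimension three. The only ingredient needed beyond Theorem~\ref{thm:1.2} is the classical fact from Riemannian geometry that the Weyl conformal curvature tensor of any Riemannian manifold of dimension $n\le 3$ vanishes identically: in those dimensions the full curvature tensor is already determined by the Ricci tensor together with the scalar curvature, so the trace-free piece isolated by the Weyl tensor is automatically zero. Since $M^3$ is locally strongly convex, its affine metric $h$ is a genuine positive-definite Riemannian metric on the $3$-manifold $M^3$, and hence the curvature tensor $\hat{R}$ of $h$ has vanishing Weyl part.

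First I would record this observation: any locally strongly convex affine hypersphere $M^3\subset\mathbb{R}^4$ with $\hat{R}\cdot C=0$ automatically satisfies all the hypotheses of Theorem~\ref{thm:1.2} with $n=3$, because the extra assumption there that the Weyl curvature tensor vanishes is \emph{vacuous} in dimension three. Then I would simply invoke Theorem~\ref{thm:1.2}: $M^3$ is affinely equivalent either to a hyperquadric (the case $C=0$), or to the flat hyperbolic affine hypersphere given by \eqref{eqn:1.2} with $n=3$, namely $x_1x_2x_3x_4=1$, or to the hyperbolic affine hypersphere given by \eqref{eqn:1.3} with $n=3$, namely $(x_3^2-x_1^2-x_2^2)^3x_4^2=1$. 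Reading off these three cases yields precisely the list in the statement, and the proof is complete.

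There is essentially no obstacle here, since all the substance of the corollary is contained in Theorem~\ref{thm:1.2}; the only point worth a moment's care is the bookkeeping when setting $n=3$ in \eqref{eqn:1.2} and \eqref{eqn:1.3}, i.e.\ checking that the exponents and the number of coordinates in those defining equations specialize to the two hypersurfaces written in the statement, which is immediate. If one preferred not to cite the vanishing of the Weyl tensor in dimension three as a black box, one could instead observe that the step in the proof of Theorem~\ref{thm:1.2} where vanishing Weyl curvature is invoked becomes automatic for $n=3$; but since Theorem~\ref{thm:1.2} is already at our disposal this is unnecessary.
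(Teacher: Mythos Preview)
Your proposal is correct and follows essentially the same approach as the paper: the corollary is stated immediately after Theorem~\ref{thm:1.2} with the remark that it follows from Theorem~\ref{thm:1.2} together with the fact that the Weyl curvature tensor vanishes automatically for $n=3$. Your only additional commentary (the bookkeeping on specializing \eqref{eqn:1.2} and \eqref{eqn:1.3} to $n=3$) is fine and does not deviate from the paper's reasoning.
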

\begin{remark}\label{rem:1.1}
Conjecture \ref{conj:1.1} is true for $n=2,3,4$ by the following three facts:
\begin{enumerate}
\item[(1)] By Theorem 1.1 and Corollary 2.1 in \cite{CH}, Theorem \ref{thm:1.2} confirms
Conjecture \ref{conj:1.1} if either $n=3$, or $M^n$ being conformally flat for $n\geq4$;

\item[(2)] It has been shown in \cite{LXX} that Conjecture \ref{conj:1.1} is true if either $n=2$,
or $M^n$ being constant scalar curvature for $n\geq3$.

\item[(3)] It was shown in Lemma 3.1 of \cite{DVY} that
if $\hat{R}\cdot C=0$ and $n=4$, then the Pick invariant $J$ can only take four
constant values at any point, which implies that locally $J$ and thus the scalar curvature
is constant. This together with (2) confirms Conjecture \ref{conj:1.1} for $n=4$.
\end{enumerate}
\end{remark}

Second, by removing the restriction of affine hyperspheres we give an
answer to Problem \ref{prob:1.1} under the assumption
that Weyl curvature tensor vanishes identically.

\begin{theorem}\label{thm:1.3}
Let $M^n$, $n\geq3$, be a locally strongly convex affine hypersurface in
$\mathbb{R}^{n+1}$ with $\hat{R}\cdot C=0$ and vanishing Weyl curvature tensor.
Denote by $m$ the number of the distinct eigenvalues of its Schouten tensor.
Then $m\leq2$, and one of the two cases occurs:
\begin{enumerate}
\item[(i)] $m=1$, $M^n$ is either a hyperquadric, or a flat affine hypersurface;

\item[(ii)] $m=2$, $M^n$ is affinely equivalent to either \eqref{eqn:1.3},
or one of the six quasi-umbilical affine hypersurfaces with quasi-Einstein metric
and nonzero scalar curvature, explicitly described in Theorem \ref{thm:4.1}.
\end{enumerate}
\end{theorem}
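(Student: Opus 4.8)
The plan is to turn the relation $\hat R\cdot C=0$ into a pointwise algebraic condition — this is where the vanishing of the Weyl tensor is essential — and then to integrate the resulting normal form by means of the affine structure equations. Since the Weyl tensor vanishes, $\hat R$ is completely determined by the Schouten tensor $\mathcal S$ of $h$, namely $\hat R(X,Y)Z=h(\mathcal SY,Z)X-h(\mathcal SX,Z)Y+h(Y,Z)\mathcal SX-h(X,Z)\mathcal SY$. On the open dense subset where the number of distinct eigenvalues of $\mathcal S$ is locally constant, pick a local $h$-orthonormal eigenframe $e_1,\dots,e_n$ with $\mathcal Se_i=\lambda_ie_i$. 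Then $\hat R(e_i,e_j)=(\lambda_i+\lambda_j)R_{ij}$, where $R_{ij}\in\mathfrak{so}(n)$ is the rotation in the $2$-plane spanned by $e_i,e_j$, so $\hat R\cdot C=0$ is equivalent to $R_{ij}\cdot C=0$ for every pair with $\lambda_i+\lambda_j\neq0$. As $\{A\in\mathfrak{so}(n):A\cdot C=0\}$ is a Lie subalgebra, $C$ is annihilated by the whole Lie algebra generated by these $R_{ij}$; and since $\mathbb R^{p}$ carries no nonzero $SO(p)$-invariant symmetric $3$-tensor for $p\ge2$, any such factor acting on a subspace of dimension $\ge2$ annihilates the corresponding part of $C$.

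Attach to each point the graph on $\{1,\dots,n\}$ joining $i\ne j$ precisely when $\lambda_i+\lambda_j\neq0$. If $\mathcal S\equiv0$ then $\hat R=0$ and $h$ is flat. Otherwise the graph has an edge, and an elementary discussion of the eigenvalue sums shows it has at most two connected components: if it is connected, the generated Lie algebra is all of $\mathfrak{so}(n)$, so $C\equiv0$ and $M$ is a hyperquadric by the Pick-Berwald theorem; if it has two components $V_1,V_2$, these are the two eigenspaces of $\mathcal S$, with eigenvalues $\nu$ and $-\nu$, $\nu\neq0$. In the latter case, if both eigenspaces have dimension $\ge2$ then $C$ is $SO(\dim V_1)\times SO(\dim V_2)$-invariant, and since no nonzero symmetric $3$-tensor on $V_1\oplus V_2$ is invariant under that group, $C\equiv0$, so again $M$ is a hyperquadric and $m=1$, a contradiction. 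Hence $m\le2$, with $m=1$ yielding case (i); and $m=2$ means $\mathcal S$ has eigenvalues $\nu$ (multiplicity $n-1$) and $-\nu$ (multiplicity $1$) with $\nu\neq0$, so $(M,h)$ is quasi-Einstein with nonzero scalar curvature.

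It remains to analyze $m=2$. Writing $e_n$ for a unit spanning vector of the $1$-dimensional eigenspace, invariance of $C$ under the $SO(n-1)$ acting on the complementary $(n-1)$-dimensional eigenspace leaves a $2$-parameter family of possibilities for $C$; apolarity then pins it down up to a single function, so that the difference tensor $K$ acquires the special form in which $K_{e_n}$ is diagonal with an eigenvalue of multiplicity $n-1$, while each $K_{e_i}$ ($i<n$) preserves $\langle e_i,e_n\rangle$ and annihilates the remaining directions. Substituting this into the Gauss equation for $\mathrm{Ric}$ forces $S$ to be diagonal in the same frame with an eigenvalue of multiplicity $n-1$, i.e.\ $M$ is quasi-umbilical. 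Moreover, being conformally flat with a Schouten eigenvalue of multiplicity $n-1$, $(M,h)$ is locally a warped product $I\times_fN^{n-1}(c)$ over a space form; combining this with the Codazzi and Gauss equations and the explicit form of $K$ shows the affine normalization is of product type as well. Solving the remaining first-order system for $f$, $c$ and the invariant functions, separating the affine-hypersphere subcase — which is precisely \eqref{eqn:1.3} by Theorem \ref{thm:1.2} — from the non-affine-hypersphere subcase, and reconstructing the position vector, gives \eqref{eqn:1.3} together with the six quasi-umbilical hypersurfaces listed in Theorem \ref{thm:4.1}; a final check shows these normal forms govern the eigendata on all of $M$, so $m$ is globally defined and $\le2$. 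This is case (ii).

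The main obstacle is this last step. The algebra reduces everything rather cleanly to a quasi-umbilical normal form, but extracting the explicit list still requires integrating the structure ODEs under the locally-strongly-convex hypothesis, organizing the integration constants, and recognizing the resulting immersions among the known composition and Calabi-product constructions of affine hyperspheres. Everything before that — once the vanishing of the Weyl tensor has been used to express $\hat R$ through $\mathcal S$ — is in essence linear algebra together with the apolarity identity.
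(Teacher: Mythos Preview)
Your proposal is correct and reaches the same conclusion as the paper, but your route to the pointwise normal form is genuinely different and more conceptual. The paper, in its Proposition~3.1 and Lemmas~3.1--3.3, expands $\hat R\cdot K=0$ componentwise in a Schouten eigenframe, extracts identities such as $(\nu_i+\nu_j)K_{e_i}e_j=0$ and $(\nu_i+\nu_j)(K_{e_i}e_i-K_{e_j}e_j)=0$ by hand, and then argues case by case that $m\le2$, that for $m=2$ the eigenvalues are $\pm\nu$ with one simple, and that $K$ and $S$ assume the quasi-umbilical shape (3.18)--(3.20). You instead note that the annihilator of $C$ in $\mathfrak{so}(n)$ is a Lie subalgebra containing every plane rotation $R_{ij}$ with $\lambda_i+\lambda_j\ne0$, encode these pairs as edges of a graph, and use $[R_{ij},R_{jk}]\propto R_{ik}$ together with the nonexistence of $SO(p)$-invariant cubic forms for $p\ge2$ to force $C=0$ unless the graph splits into two components, which are then automatically the eigenspaces for $\pm\nu$ with one of dimension~$1$. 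This packages Proposition~3.1 and Lemmas~3.1--3.2 into a single representation-theoretic statement; the cost is only that the explicit scalar relations the paper writes down are still needed for the integration, and you recover them afterwards from $SO(n-1)$-invariance plus apolarity and from the Gauss equation.

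One point to tighten: you deduce the warped-product structure $I\times_f N^{n-1}(c)$ from ``being conformally flat with a Schouten eigenvalue of multiplicity $n-1$'', but the hypothesis of the theorem is only $W=0$, which for $n=3$ is vacuous and does not by itself yield conformal flatness (nor the Codazzi property of the Schouten tensor that underlies the warped-product conclusion). The paper sidesteps this by deriving the warped product --- and indeed the position vector itself --- directly from the algebraic form of $K$ and $S$, via the Anti\'c--Dillen--Schoels--Vrancken decomposition (Theorem~2.2 here), which uses only the affine Codazzi equations and works uniformly for all $n\ge3$. Your sketch already invokes ``the Codazzi and Gauss equations and the explicit form of $K$'', so the repair is routine: derive $\hat\nabla_X T=-\alpha X$ and the integrability of the distributions from the \emph{affine} Codazzi equation rather than from intrinsic conformal flatness, and the rest of your outline goes through as in the paper's Section~4.
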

\begin{remark}\label{rem:1.2}
$m=1$ in case (i) means that affine metric is constant sectional curvature.
Related to this, we prove in Proposition \ref{prop:3.1}
that any non-degenerate affine hypersurface satisfies $\hat{R}\cdot C=0$
and affine metric being constant sectional curvature if and only if
it is either a hyperquadric or a flat affine hypersurface.
\end{remark}
\begin{remark}\label{rem:1.3}
The case (i) is partially classified in Remark \ref{rem:4.1}, its complete classification is
still complicated and not solved. In fact, Anti\'c-Li-Vrancken-Wang \cite{ALVW} recently
classify the locally strongly convex affine hypersurfaces with constant sectional curvature
if the hypersurface admits at most one affine principal curvature of multiplicity one.
Even for affine surfaces with flat affine metric, the classification problem
is still open up to now.
\end{remark}
\begin{remark}\label{rem:1.4}
The examples in case (ii) are the generalized Calabi compositions of
a hyperquadric and a point in some special forms \cite{ADSV}. The construction
method of such examples initially originates from E. Calabi \cite{C},
and now has been extended and characterized by M. Anti\'c, F. Dillen,
L. Vrancken, Z. Hu, H. Li, et al. in \cite{ADSV,AHLV,DV94,HLV1}.
\end{remark}

This paper is organized as follows. In Section \ref{sect:2}, we briefly review
the local theory of equiaffine hypersurfaces, some notions and results
of conformally flat manifolds and warped product manifolds.
In Section \ref{sect:3}, we study the properties of the hypersurfaces
involving the eigenvalues and eigenvalue distributions of Schouten tensor,
the difference tensor and the affine principal curvatures, and present the
proof of Theorem \ref{thm:1.2}. Based on these properties and known results,
in Section \ref{sect:4} we discuss all the possibilities of the immersion and
complete the proof of Theorem \ref{thm:1.3}.

\section{Preliminaries}\label{sect:2}

In this section, we briefly review the local theory of equiaffine hypersurfaces.
For more details, we refer to the monographs \cite{LSZH,NS}.

Let $\mathbb{R}^{n+1}$ denote the standard $(n+1)$-dimensional real unimodular
affine space that is endowed with its usual flat connection $D$ and a parallel volume
form $\omega$, given by the determinant. Let $F: M^{n} \rightarrow \mathbb{R}^{n+1}$
be an oriented non-degenerate hypersurface immersion. On such a hypersurface, up to a sign
there exists a unique transversal vector field $\xi$, called the {\it affine normal}.
A non-degenerate hypersurface equipped with the affine normal is called an
{\it (equi)affine hypersurface}, or a {\it Blaschke hypersurface}.
Denote always by $X,Y,Z,U$ the tangent vector fields on $M^{n}$.
By the affine normal we can write
\begin{align}
\qquad &D_XF_*Y=F_*\nabla_XY+h(X,Y)\xi,  \label{eqn:2.1}\hspace{2cm}&(\textrm{Gauss formula})\\
&D_X\xi=-F_*SX,  \label{eqn:2.2}&(\textrm{Weingarten formula})
\end{align}
which induce on $M^n$ the {\it affine connection} $\nabla$, a semi-Riemannian
metric $h$, called the {\it affine metric}, the {\it affine shape operator} $S$
and the {\it cubic form} $C:=\nabla h$. An affine hypersurface is called
{\it locally strongly convex} if $h$ is definite, we always choose $\xi$,
up to a sign, such that $h$ is positive definite. We call a locally strongly convex affine
hypersurface {\it quasi-umbilical} (resp. {\it quasi-Einstein}) if it admits exactly two
distinct eigenvalues of $S$ (resp. Ricci tensor of $h$), one of which is simple.

Let $\hat{\nabla}$ be the Levi-Civita connection of the affine metric $h$.
The difference tensor $K$ is defined by
\begin{equation}\label{eqn:2.3}
K(X,Y):=\nabla_XY-\hat{\nabla}_XY.
\end{equation}
We also write $K_XY$
and $K_X=\nabla_X-\hat{\nabla}_X$.
Since both $\nabla$ and $\hat{\nabla}$ have zero torsion,
$K$ is symmetric in $X$ and $Y$. It is
related to the totally symmetric cubic form $C$ by
\begin{equation}\label{eqn:2.4}
C(X, Y, Z)=-2h(K(X, Y), Z),
\end{equation}
which implies that the operator $K_X$ is symmetric relative to $h$.
Moreover, $K$ satisfies the {\it apolarity condition}, namely,
${\rm tr}\, K_X=0$ for all $X$.

The curvature tensor $\hat{R}$ of affine metric $h$, $S$ and $K$ are related
by the following Gauss and Codazzi equations:
\begin{equation}\label{eqn:2.5}
\begin{array}{lll}
&\hat{R}(X,Y)Z=\tfrac{1}{2}[h(Y,Z)SX-h(X,Z)SY+h(SY,Z)X-h(SX,Z)Y]\\[2mm]
&\qquad\qquad\qquad -[K_X,K_Y]Z,
\end{array}
\end{equation}
\begin{equation}\label{eqn:2.6}
\begin{array}{lll}
&(\hat{\nabla}_XK)(Y,Z)-(\hat{\nabla}_YK)(X,Z)\\[2mm]
&\qquad\qquad=\frac12[h(Y,Z)SX-h(X,Z)SY-h(SY,Z)X+h(SX,Z)Y],
\end{array}
\end{equation}
\begin{equation}\label{eqn:2.7}
(\hat{\nabla}_XS)Y-(\hat{\nabla}_YS)X=K(SX,Y)-K(SY,X),
\qquad\qquad\qquad\quad\
\end{equation}
where, by definitions, $[K_X,K_Y]Z=K_XK_YZ-K_YK_XZ$, and
\begin{equation*}
\begin{array}{lll}
&\hat{R}(X,Y)Z=\hat{\nabla}_X\hat{\nabla}_YZ
-\hat{\nabla}_{Y}\hat{\nabla}_XZ-\hat{\nabla}_{[X,Y]}Z,\\[2mm]
&(\hat{\nabla}_XK)(Y,Z)=\hat{\nabla}_X(K(Y,Z))
-K(\hat{\nabla}_XY,Z)-K(Y,\hat{\nabla}_XZ),\\[2mm]
&(\hat{\nabla}_XS)Y=\hat{\nabla}_X(SY)-S\hat{\nabla}_XY.
\end{array}
\end{equation*}
Contracting Gauss equation \eqref{eqn:2.5} twice
we have
\begin{equation}\label{eqn:2.8}
\chi=H+J,
\end{equation}
where $J=\tfrac1{n(n-1)}h(K,K)$, $H=\tfrac{1}{n}{\rm tr}\,S$, $\chi=\tfrac{r}{n(n-1)}$
and $r$ are the {\it Pick invariant}, {\it affine mean curvature}, {\it normalized scalar curvature}
and scalar curvature of $h$, respectively. Recall the following Ricci identity:
\begin{equation*}
\begin{array}{lll}
(\hat{R}(X, Y)\cdot K)(Z, U)=\hat{R}(X, Y)K(Z, U)-K(\hat{R}(X,Y)Z, U)-K(Z, \hat{R}(X,Y)U).
\end{array}
\end{equation*}

$M^n$ is called an affine hypersphere if $S=H\, id$.
Then it follows from \eqref{eqn:2.7} that $H$ is constant if $n\geq2$. $M^n$
is said to be a proper (resp. improper) affine hypersphere if $H$ is nonzero (resp. zero).
Moreover, a locally strongly convex affine hypersphere is called parabolic, elliptic
or hyperbolic according to $H=0$, $H>0$ or $H<0$, respectively.
For affine hyperspheres, the Gauss and Codazzi equations reduce to
\begin{equation}\label{eqn:2.9}
\begin{aligned}
\hat{R}(X,Y)Z=H[h(Y,Z)X-h(X,Z)Y]-[K_{X},K_{Y}]Z,
\end{aligned}
\end{equation}
\begin{equation}\label{eqn:2.10}
\begin{aligned}
(\hat{\nabla}_XK)(Y,Z)=(\hat{\nabla}_YK)(X,Z).
\end{aligned}
\end{equation}

We collect the following two results for later use.

\begin{theorem}[cf. Theorem 1.1 and Corollary 2.1 of \cite{CH}]\label{thm:2.1}
Let $M^n$, $n\geq3$, be a locally strongly convex affine hypersphere in $\mathbb{R}^{n+1}$
with constant scalar curvature $r$. Then $Jr\leq0$, and the traceless Ricci tensor
$\tilde{Ric}$ satisfies
$$
\|\tilde{Ric}\|_h^2\leq-\tfrac{(n+1)(n-2)}{n+2}Jr,
$$
where $\|\cdot\|_h$ denotes the tensorial norm with respect to $h$.
This equality sign holds identically, if and only if $M^n$ has parallel cubic form and
vanishing Weyl curvature tensor, if and only if one of three cases occurs:
\begin{enumerate}
\item[(i)]
$J=0$, $M^n$ is affinely equivalent to a hyperquadric;

\item[(ii)]
$J\neq0$, $r=0$, $M^n$ is affinely equivalent to \eqref{eqn:1.2};

\item[(iii)]
$J\neq0$, $r<0$, $M^n$ is affinely equivalent to \eqref{eqn:1.3}.
\end{enumerate}
\end{theorem}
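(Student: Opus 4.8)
The statement to be proved is the integral/pointwise inequality $\|\tilde{Ric}\|_h^2\leq-\tfrac{(n+1)(n-2)}{n+2}Jr$ for a locally strongly convex affine hypersphere of constant scalar curvature, together with the rigidity classification in the equality case. Let me think about how I would attack this.

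The key identity here should be Gauss equation (2.9). For an affine hypersphere, $\hat R(X,Y)Z = H[h(Y,Z)X - h(X,Z)Y] - [K_X,K_Y]Z$. So the Ricci tensor is $\mathrm{Ric}(Y,Z) = (n-1)H\,h(Y,Z) - \sum_i h([K_{e_i},K_Y]Z, e_i)$, and after using symmetry of the $K_X$ and apolarity, the traceless part $\tilde{Ric}$ is expressed purely in terms of $K$ — roughly $\tilde{Ric}(Y,Z) = \sum_i h(K_Y e_i, K_Z e_i) - \tfrac{1}{n}\|K\|^2 h(Y,Z)$ up to normalization. So $\|\tilde{Ric}\|_h^2$ and $Jr$ are both quartic expressions in $K$. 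The plan is to compute $\Delta$ of some natural scalar quartic in $K$ (the natural candidate is $\|K\|^2$ or $h(K,K)$, whose Laplacian gives, via Bochner–Weitzenböck, $\|\hat\nabla K\|^2$ plus curvature terms), and then invoke constancy of scalar curvature ($r$ const $\Leftrightarrow J$ const since $\chi=H+J$ and $H$ is const for hyperspheres) to kill $\int \Delta(\cdot)$.

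Concretely, I would: (1) write out the Bochner formula $\tfrac12\Delta\|K\|^2 = \|\hat\nabla K\|^2 + \langle \Delta_{\text{rough}} K, K\rangle$, where the rough Laplacian of $K$ is rewritten, using the Codazzi equation (2.10) ($\hat\nabla K$ totally symmetric), as a sum of curvature-contraction terms; (2) substitute Gauss (2.9) for $\hat R$ everywhere, turning all curvature terms into quartic and quadratic-in-$H$ polynomials in $K$; (3) collect terms to express the result in terms of the invariants $J$, $H$, $\|\tilde{Ric}\|_h^2$, and possibly $\|\hat\nabla K\|^2$ — this is where a Cauchy–Schwarz or algebraic curvature-operator estimate (controlling a quartic contraction like $\sum h(K_{e_i}K_{e_j}e_k, \cdot)^2$ by $\|\tilde{Ric}\|^2$ and $J^2$) produces the precise constant $\tfrac{(n+1)(n-2)}{n+2}$; (4) integrate over (a suitable compact exhaustion / use the fact that $J$ const makes the LHS of the Bochner identity vanish in a pointwise sense after the algebraic manipulation), forcing the inequality, with equality iff $\hat\nabla K=0$ and the algebraic inequality in step (3) is saturated — the latter being exactly the condition that the Weyl tensor vanishes. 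Then (5) feed $\hat\nabla C=0$ plus vanishing Weyl into Theorem 1.1 (\cite{HLV}) to get the list, and match the three surviving metric types (flat hyperquadric-like $J=0$; $r=0$, $J\neq0$; $r<0$, $J\neq 0$) to the explicit hypersurfaces \eqref{eqn:1.2} and \eqref{eqn:1.3}.

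The main obstacle is step (3): extracting the \emph{sharp} constant $\tfrac{(n+1)(n-2)}{n+2}$. This requires a delicate pointwise algebraic inequality on the symmetric trace-free operator-valued tensor $K$ — decomposing the quartic curvature contraction into irreducible pieces (the scalar part governed by $J$, the "Ricci" part governed by $\|\tilde{Ric}\|^2$, and a "Weyl-type" part that is manifestly nonnegative and vanishes precisely when the affine Weyl tensor does). Identifying that the leftover nonnegative term is exactly (a multiple of) $\|W\|^2$ — so that its vanishing, together with $\hat\nabla K = 0$ from the Bochner term, characterizes equality — is the crux; everything else is bookkeeping with (2.8)–(2.10) and an appeal to the classification in Theorem \ref{thm:1.1}. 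One should also be careful in step (4) about whether an honest integration (requiring completeness/compactness) is needed or whether, as is typical in these affine-sphere arguments, the algebra already yields a pointwise inequality of the form $0 \le \|\hat\nabla K\|^2 + (\text{nonneg algebraic term}) = $ (multiple of) $\big(-\tfrac{(n+1)(n-2)}{n+2}Jr - \|\tilde{Ric}\|^2\big)$ under the constancy hypothesis, making the result purely local.
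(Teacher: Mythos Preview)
The paper does not prove Theorem~\ref{thm:2.1} at all: it is quoted verbatim as a known result from \cite{CH} (see the attribution ``cf.\ Theorem~1.1 and Corollary~2.1 of \cite{CH}'') and is used as a black box in the proof of Theorem~\ref{thm:1.2}. So there is no ``paper's own proof'' to compare your proposal against.

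That said, your sketch is a reasonable reconstruction of how the Cheng--Hu argument goes: a Simons/Bochner-type identity for $\|K\|^2$, using the total symmetry of $\hat\nabla K$ from \eqref{eqn:2.10}, substitution of the Gauss equation \eqref{eqn:2.9}, and an algebraic estimate on the quartic contractions of $K$ to produce the sharp constant, with equality forcing $\hat\nabla K=0$ and $W=0$. Two caveats. First, you correctly flag step~(3) as the crux, but you do not actually carry it out; the decomposition of the quartic term into a nonnegative ``Weyl-type'' piece plus the $\|\tilde{Ric}\|^2$ and $J^2$ terms with the precise coefficient $\tfrac{(n+1)(n-2)}{n+2}$ is the entire content of the result, and until that computation is done the proposal is only a plan. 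Second, in step~(5) you appeal to Theorem~\ref{thm:1.1} and then ``match the three surviving metric types''; this matching is itself nontrivial (one must check which entries in the Hu--Li--Vrancken list have vanishing Weyl tensor), and is in fact part of what \cite{CH} establishes. None of this is wrong, but as written the proposal is an outline rather than a proof.
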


\begin{theorem}[cf. Theorem 1 of \cite{ADSV}]\label{thm:2.2}
Let $M^{m+1}$, $m\geq2$, be a locally strongly convex
affine hypersurface of the affine space $\mathbb{R}^{m+2}$ such that its
tangent bundle is an orthogonal sum, with respect to the affine metric $h$,
of two distributions: a one-dimensional distribution $\mathcal{D}_1$ spanned
by a unit vector field $T$ and an $m$-dimensional distribution
$\mathcal{D}_2$, such that
\begin{align*}
& K(T,T)=\lambda_1 T,\quad K(T, X)=\lambda_2 X, \\
& ST=\mu_1 T,\quad SX=\mu_2 X, \quad\forall\ X\in
\mathcal{D}_2.
\end{align*}
Then either $M^{m+1}$ is an affine hypersphere such that $K_T=0$
or is affinely congruent to one of the following immersions:
\begin{enumerate}
\item[(1)] $f(t,x_1,\dots,x_m)=(\gamma_1(t),\gamma_2(t)g_2(x_1,\dots,x_m))$,
for $\gamma_1,\gamma_2$ such that
$$
\epsilon\gamma_1'\gamma_2(\gamma_1'\gamma_2''- \gamma_1''\gamma_2')<0;
$$
\item[(2)] $f(t,x_1,\dots,x_m)=\gamma_1(t)C(x_1,\dots,x_m)+\gamma_2(t)e_{m+1}$,
for $\gamma_1, \gamma_2$ such that
$$
{\rm sgn}\Big(\gamma_1'\gamma_2''-\gamma_1''\gamma_2'\Big)
={\rm sgn}(\gamma_1'\gamma_1)\neq0;
$$

\item[(3)] $f(t,x_1,\dots,x_m)=C(x_1,\dots,x_m)+\gamma_2(t)e_{m+1}+\gamma_1(t)e_{m+2}$,
for $\gamma_1, \gamma_2$ such that
$$
{\rm sgn}(\gamma_1'\gamma_2''-\gamma_1''\gamma_2')={\rm sgn} (\gamma_1')\neq0.
$$
\end{enumerate}
Here $g_2:\mathbb R^m\rightarrow \mathbb R^{m+1}$ is a proper affine hypersphere
centered at the origin with affine mean curvature $\epsilon$, and $C:\mathbb R^{m}
\rightarrow \mathbb R^{m+2}$ is an improper affine hypersphere, given by
$C(x_1,\dots,x_m)=(x_1,\dots,x_m, p(x_1,\dots,x_m),1)$,
with the affine normal $e_{m+1}$.
\end{theorem}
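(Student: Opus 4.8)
The plan is to use the two $h$-orthogonal eigendistributions together with the affine structural equations to show that $M^{m+1}$ is locally a warped product and then to identify the factors. First I would fix a local $h$-orthonormal frame $\{T,e_1,\dots,e_m\}$ adapted to $\mathcal D_1\oplus\mathcal D_2$ and record the algebraic consequences of the hypotheses: the apolarity condition ${\rm tr}\,K_T=0$ forces $\lambda_1=-m\lambda_2$, and the symmetry of $K$ together with apolarity on $\mathcal D_2$ constrains the remaining components $K(e_i,e_j)$. Contracting the Gauss equation \eqref{eqn:2.5} and inserting $ST=\mu_1T$, $SX=\mu_2X$ shows that the Ricci operator of $h$ is also diagonal in this splitting, with eigenvalues expressed through $\mu_1,\mu_2,\lambda_2$ and the part of $K$ tangent to $\mathcal D_2$; in particular $\mathcal D_1$ and $\mathcal D_2$ are eigendistributions of the Ricci tensor.

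Next I would compute the Levi-Civita connection $\hat\nabla$ in the adapted frame. Feeding the pairs $(T,X)$ and $(X,Y)$ with $X,Y\in\mathcal D_2$ into the Codazzi equation \eqref{eqn:2.6} yields $\hat\nabla_TT$, $\hat\nabla_TX$, $\hat\nabla_XT$ and the $\mathcal D_1$-component of $\hat\nabla_XY$. The goal is to prove: (a) $\mathcal D_1$ is autoparallel, hence its integral curves are geodesics up to parametrization; (b) $\mathcal D_2$ is integrable with umbilical (``spherical'') leaves whose mean curvature vector is a multiple of $T$; and (c) $\lambda_2,\mu_1,\mu_2$ are constant on each leaf of $\mathcal D_2$ and, along the $T$-direction, satisfy a first-order ODE system coming from \eqref{eqn:2.6}--\eqref{eqn:2.7}. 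The standard decomposition theorem for a metric carrying an autoparallel line field together with a complementary spherical distribution then makes $(M^{m+1},h)$ a local warped product $I\times_{f}N^m$ with $T=\partial_t$.

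Then I would reconstruct the immersion. Restricting the Gauss and Weingarten formulas \eqref{eqn:2.1}--\eqref{eqn:2.2} to a leaf $N^m$ of $\mathcal D_2$ and using $K(T,X)=\lambda_2X$, $SX=\mu_2X$ forces $N^m$, equipped with the transversal field induced from $\xi$, to be an affine hypersphere: proper and centered at a fixed point when the position vector of $N$ has a component transverse to $N$, improper otherwise. This dichotomy produces the proper affine hypersphere $g_2$ of case (1) versus the improper affine hypersphere $C$ of cases (2) and (3); the further split between (2) and (3) reflects whether the fixed complementary directions contribute one or two dimensions. Writing $F(t,x)=\gamma_1(t)\,v(x)+\gamma_2(t)\,w$ (or the analogous ansatz for case (3)) and substituting back into \eqref{eqn:2.1}--\eqref{eqn:2.2} converts the structural equations into exactly the ODEs and sign conditions on $\gamma_1,\gamma_2$ stated above; those inequalities are precisely the requirements that $h$ be positive definite and that $\xi$ be the correctly normalized affine normal. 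The degenerate branch $\lambda_2=0$, i.e.\ $K_T=0$, is where the warping collapses and $M^{m+1}$ is itself an affine hypersphere.

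I expect the second step to be the main obstacle: establishing the integrability and the autoparallel/spherical properties of the two distributions, and extracting the complete ODE system for $(\lambda_2,\mu_1,\mu_2)$ from \eqref{eqn:2.6}--\eqref{eqn:2.7}. This demands careful bookkeeping of all components of $\hat\nabla$ in the adapted frame together with repeated use of the total symmetry of $K$ and the apolarity condition. Once the warped-product structure is in place, recognizing the leaves as affine hyperspheres is comparatively formal, though the final normalization --- translating ``locally strongly convex Blaschke immersion'' into the stated sign conditions --- still requires some care.
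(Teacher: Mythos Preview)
This theorem is not proved in the present paper: it is quoted from \cite{ADSV} (Theorem~1 there) as a background result in the Preliminaries, and the paper only \emph{uses} it later in Section~\ref{sect:4}. So there is no proof here to compare your proposal against.

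That said, your outline matches the structure of the argument in \cite{ADSV} as it is partially recalled in Section~\ref{sect:4}: the paper explicitly cites from \cite{ADSV} the relations \eqref{eqn:4.4} for $\hat\nabla_TT$, $\hat\nabla_XT$, and the ODEs for $\alpha,\lambda_2,\mu_2$ along $T$, the constancy of these functions along $\mathcal D_2$, the warped-product decomposition $\mathbb R\times_f M_2$ with $\alpha=-T(\ln f)$, and the identification of the $\mathcal D_2$-leaves as affine hyperspheres via the projected difference tensor $L^2$ in \eqref{eqn:4.5}. The case split into the three immersion types is governed, as the paper recalls in \eqref{eqn:4.9}, \eqref{eqn:4.14}, \eqref{eqn:4.16}, by whether $\mu_2-\lambda_2^2+\alpha^2$ and $\mu_2^2+(\alpha-\lambda_2)^2$ vanish --- this is the precise form of your ``proper vs.\ improper'' and ``one vs.\ two complementary directions'' dichotomy. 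Your sketch is therefore on the right track, but the actual proof lives in \cite{ADSV}, not here.
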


Finally, we review some notions and results of conformally flat manifolds and
warped product manifolds. The Schouten tensor of $(1,1)$-type,
on a Riemannian manifold $(M, h)$ of dimension $n\geq3$,
is a self-adjoint operator relative to $h$, defined by
\begin{equation}\label{eqn:2.11}
P=\tfrac{Q}{n-2}-\tfrac{r}{2(n-1)(n-2)}I,
\end{equation}
where $Q$, $I$ and $r$ are the Ricci operator, identity operator and scalar curvature, respectively.
Note that $(M,h)$ is conformally flat, which means that a neighborhood
of each point can be conformally immersed into the Euclidean space $\mathbb{R}^n$.
For $n\geq4$, $(M,h)$ is conformally fat if and only if the Weyl curvature tensor
defined below vanishes.
\begin{equation*}
W(X,Y)Z=\hat{R}(X,Y)Z-[h(Y,Z)PX-h(X,Z)PY+h(PY,Z)X-h(PX,Z)Y].
\end{equation*}
It is well known that for $n=3$, $W=0$ identically, and $(M,h)$ is conformally flat if and only if
the Schouten tensor is a Codazzi tensor. If $W=0$, the Riemannian curvature tensor
can be rewritten by
\begin{equation}\label{eqn:2.12}
\hat{R}(X,Y)Z=h(Y,Z)PX-h(X,Z)PY+h(PY,Z)X-h(PX,Z)Y.
\end{equation}
$(M,h)$ of dimension $n\geq3$ is of constant sectional curvature if and only if $W=0$
and its metric is Einstein, where Einstein metric means that the eigenvalues of
the operator $P$ or $Q$ are single.

For Riemannian manifolds $(B,g_B)$, $(M_1,g_1)$ and a positive
function $f:B\rightarrow \mathbb{R}$, the product manifold $M:=B\times M_1$,
equipped with the metric $h=g_B\oplus f^2g_1$, is called a warped product manifold
with the warped function $f$, denoted by $B\times_{f}M_1$.
If $E$ and $U, V$ are independent vector fields of $B$ and $M_1$, respectively,
then (cf. (2) of \cite{BGV}) the sectional curvatures of $M$ satisfy
\begin{equation}\label{eqn:2.13}
\begin{array}{lll}
K^M_{EU}=-\tfrac{H_{f}(E,E)}{fh(E,E)},\
K^M_{UV}=\tfrac{1}{f^2}K^{M_1}_{UV}
-\tfrac{h(\hat{\nabla}f,\hat{\nabla}f)}{f^2},
\end{array}
\end{equation}
where $\hat{\nabla}$ is the Levi-Civita connection of $(M, h)$,
$K^M$ and $K^{M_1}$ are the sectional curvatures of
$M$ and $M_1$, respectively, $\hat{\nabla}f$ is the gradient
of the function $f$ and $H_{f}(X,Y)=h(\hat{\nabla}_X\hat{\nabla}f,Y)$
is the Hessian of $f$. Related to the conformally flat manifold, we recall

\begin{theorem}[cf. Theorem 3.7 of \cite{BGV}]\label{thm:2.3}
Let $M=B\times_{f}M_1$ be a warped product with dim~$B=1$
and dim~$M_1\geq2$. Then $M$ is conformally flat if and only if,
up to a reparametrization of $B$, $(M_1,g_1)$ is a space of
constant curvature and $f$ is any positive function.
\end{theorem}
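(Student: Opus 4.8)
The plan is to reduce the statement to the conformal flatness of a Riemannian product and then analyze the latter through its Schouten tensor. First I would reparametrize $B$ to unit speed, so that $h=dt^{2}+f^{2}g_{1}$ on $I\times M_{1}$; introducing a new parameter $s$ on $B$ with $ds=dt/f$ gives
\[
h=f^{2}(ds^{2}+g_{1})=f^{2}\bar{h},\qquad \bar{h}:=ds^{2}+g_{1},
\]
where $\bar{h}$ is the Riemannian product metric on $J\times M_{1}$ and $f$ is now regarded as a function of $s$. Since conformal flatness depends only on the conformal class of the metric, $M$ is conformally flat if and only if $(J\times M_{1},\bar{h})$ is, and this holds for every positive function $f$. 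Hence it suffices to prove: the product $\bar{h}=ds^{2}+g_{1}$, of dimension $m+1\ge 3$ with $m=\dim M_{1}$, is conformally flat if and only if $(M_{1},g_{1})$ has constant sectional curvature.

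Next I would record the curvature of $\bar{h}$. Writing $E:=\partial_{s}$, the Riemannian product structure (cf.\ \eqref{eqn:2.13} with warping function $\equiv 1$) gives that $E$ is $\hat{\nabla}$-parallel, that $\hat{R}(E,\cdot)=0$, and that $\hat{R}(U,V)W=R^{g_{1}}(U,V)W$ for $U,V,W$ tangent to $M_{1}$, where $R^{g_{1}},Q^{g_{1}},r^{g_{1}}$ denote the curvature tensor, Ricci operator and scalar curvature of $(M_{1},g_{1})$. Thus the Ricci operator of $\bar{h}$ is block diagonal, with $QE=0$, $Q|_{TM_{1}}=Q^{g_{1}}$ and scalar curvature $r^{g_{1}}$, so by \eqref{eqn:2.11} the Schouten operator $P$ of $\bar{h}$ is block diagonal,
\[
PE=-\tfrac{r^{g_{1}}}{2m(m-1)}\,E,\qquad P|_{TM_{1}}=\tfrac{1}{m-1}Q^{g_{1}}-\tfrac{r^{g_{1}}}{2m(m-1)}I .
\]

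For the direction ``$\Leftarrow$'', if $(M_{1},g_{1})$ has constant curvature $c$ then $Q^{g_{1}}=(m-1)c\,I$ and $r^{g_{1}}=m(m-1)c$, so $P$ has the constant block form $PE=-\tfrac{c}{2}E$, $P|_{TM_{1}}=\tfrac{c}{2}I$; since both $\mathbb{R}E$ and $TM_{1}$ are $\hat{\nabla}$-parallel distributions, $P$ is $\hat{\nabla}$-parallel. A one-line check then shows that $\hat{R}$ has the form \eqref{eqn:2.12} when $m\ge 3$, and that $P$ is a Codazzi tensor when $m=2$; by the criteria recalled in Section \ref{sect:2}, $\bar{h}$, hence $h$, is conformally flat. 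For the direction ``$\Rightarrow$'', suppose $\bar{h}$ is conformally flat. If $m\ge 3$ then $W=0$, so \eqref{eqn:2.12} holds with the $P$ above; restricting \eqref{eqn:2.12} to vectors tangent to $M_{1}$ (on which $\bar{h}$ and $g_{1}$ agree) shows that $R^{g_{1}}$ has the form on the right-hand side of \eqref{eqn:2.12} with the operator $P|_{TM_{1}}$. Contracting this identity twice forces $P|_{TM_{1}}$ to equal the Schouten operator $P^{g_{1}}$ of $(M_{1},g_{1})$; comparing with the formula for $P|_{TM_{1}}$ above yields $Q^{g_{1}}=\tfrac{r^{g_{1}}}{m}I$, i.e.\ $(M_{1},g_{1})$ is Einstein, and an Einstein conformally flat manifold of dimension $\ge 3$ has constant sectional curvature. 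If $m=2$ then $\bar{h}$ conformally flat means that the Schouten tensor $P$ is Codazzi; writing out $(\hat{\nabla}_{U}P)E=(\hat{\nabla}_{E}P)U$ for $U$ tangent to $M_{1}$, and using $\hat{\nabla}E=0$ together with $E(r^{g_{1}})=0$, one gets $U(r^{g_{1}})=0$ for all such $U$, so $r^{g_{1}}$ is constant and the surface $(M_{1},g_{1})$ has constant curvature.

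The conceptual key is the conformal reduction in the first step, which trivializes the warping function and replaces the warped product by an honest Riemannian product; after that the computation is essentially mechanical. Two points need care: in the reverse direction with $m\ge 3$ one must contract \eqref{eqn:2.12} correctly in order to identify $P|_{TM_{1}}$ with the intrinsic Schouten tensor of $g_{1}$ before reading off the Einstein condition; and the case $m=2$, that is $\dim M=3$, must be treated separately through the Codazzi condition on the Schouten tensor, since \eqref{eqn:2.12} then carries no information. I expect the bookkeeping of the block structure of $P$ and, in particular, the case $m=2$ to be the main obstacle.
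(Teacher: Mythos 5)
The paper gives no proof of this statement: Theorem \ref{thm:2.3} is quoted directly from Theorem 3.7 of \cite{BGV}, so there is no in-paper argument to measure yours against. Judged on its own, your proof is correct and complete. The conformal reduction $h=dt^{2}+f^{2}g_{1}=f^{2}(ds^{2}+g_{1})$ with $ds=dt/f$ is exactly the right key idea: it absorbs the warping function into a conformal factor (which explains why the theorem imposes no condition on $f$) and reduces the question to deciding when the Riemannian product $ds^{2}+g_{1}$ is conformally flat. Your treatment of that product via the block-diagonal Schouten operator checks out: the contraction of the restricted identity gives $Q^{g_{1}}=(\operatorname{tr}A)I+(m-2)A$ with $\operatorname{tr}A=\tfrac{r^{g_{1}}}{2(m-1)}$, hence $A=P^{g_{1}}$, and comparing with $P|_{TM_{1}}=\tfrac{1}{m-1}Q^{g_{1}}-\tfrac{r^{g_{1}}}{2m(m-1)}I$ indeed forces $Q^{g_{1}}=\tfrac{r^{g_{1}}}{m}I$; the case $\dim M_{1}=2$ is correctly handled through the Codazzi condition on the Schouten tensor. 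One wording quibble: for $\dim M_{1}=3$ the sentence ``an Einstein conformally flat manifold of dimension $\geq 3$ has constant curvature'' is slightly loose, since you have not established conformal flatness of $M_{1}$ in the three-dimensional (Cotton) sense; but what you actually possess --- $R^{g_{1}}$ equal to the Kulkarni--Nomizu-type expression built from a multiple of the identity, followed by Schur's lemma (or simply the fact that a $3$-dimensional Einstein manifold has constant curvature) --- already yields the conclusion, so nothing is missing. For comparison, the argument in \cite{BGV} works directly from the warped-product curvature formulas (those quoted as \eqref{eqn:2.13} here) and analyzes the Weyl and Cotton tensors of the warped metric itself; your conformal-change route is shorter and arguably cleaner, since it trivializes $f$ at the outset rather than carrying it through the curvature computation.
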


\section{Conformally flat affine hypersurfaces with $\hat{R}\cdot C=0$}\label{sect:3}
From this section on, when we say that an affine hypersurface has {\it semi-parallel cubic form},
it always means that $\hat{R}\cdot C=0$, equivalently $\hat{R}\cdot K=0$. Then,
by the Ricci identity of $K$ we have
\begin{equation}\label{eqn:3.1}
\hat{R}(X, Y)K(Z, U)=K(\hat{R}(X,Y)Z, U)+K(Z, \hat{R}(X,Y)U).
\end{equation}
In fact, by \eqref{eqn:2.4} and the Ricci identities of $C$ and $K$,
the equivalence above follows from the following formula:
\begin{equation}\label{eqn:3.2}
\begin{array}{lll}
&(\hat{R}\cdot C)(U,V,X,Y,Z)=(\hat{R}(U,V)\cdot C)(X,Y,Z)\\
&=-C(X,Y,\hat{R}(U,V)Z)-C(X,\hat{R}(U,V)Y,Z)-C(\hat{R}(U,V)X,Y,Z)\\
&=2[h(K_XY,\hat{R}(U,V)Z)+h(K_X\hat{R}(U,V)Y,Z)+h(K_Y\hat{R}(U,V)X,Z)]\\
&=-2h(\hat{R}(U,V)K_XY-K_X\hat{R}(U,V)Y-K_Y\hat{R}(U,V)X,Z)\\
&=-2h((\hat{R}(U,V)\cdot K)(X,Y),Z).
\end{array}
\end{equation}

First, under the assumption of the affine metric being constant sectional curvature,
we extend the result of Theorem 6.2 in \cite{HX},
stating that a locally strongly convex affine surface has
semi-parallel cubic form if and only if it is either a quadric or a flat affine surface,
from the surface to the non-degenerate affine hypersurface as follows.

\begin{proposition}\label{prop:3.1}
Let $M^n$, $n\geq2$, be a non-degenerate affine hypersurface of $\mathbb{R}^{n+1}$.
Then $M^n$ has semi-parallel cubic form and affine metric being constant
sectional curvature, if and only if $M^n$ is either a hyperquadric,
or a flat affine hypersurface.
\end{proposition}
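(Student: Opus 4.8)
The plan is to show that constant sectional curvature together with $\hat R\cdot C=0$ forces, at every point, either $K=0$ (hyperquadric, by Pick--Berwald) or scalar curvature $r=0$ (flat affine metric). Write $\hat R(X,Y)Z = c\,[h(Y,Z)X - h(X,Z)Y]$ with $c$ constant, and substitute this into the Ricci-type identity \eqref{eqn:3.1}. The right-hand side becomes $c\,[h(Y,K(Z,U))X - h(X,K(Z,U))Y + h(Z,\text{(stuff)})\cdots]$ and the left-hand side is $c\,[h(Y,Z)K(X,U) - h(X,Z)K(Y,U)]$ after also applying $\hat R$ to $K(Z,U)$; collecting everything, \eqref{eqn:3.1} reduces to a purely algebraic identity on $K$ multiplied by $c$. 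So either $c=0$ (done: flat affine metric) or $K$ satisfies this algebraic identity at every point.

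Next I would extract the algebraic content when $c\neq0$. Plug in well-chosen vectors: take $U=Z$ and an $h$-orthonormal frame, and contract. First I would fix $X=Y$ orthogonal to $Z$; then the identity yields $h(K(X,X),K(Z,Z)) - \|K(X,Z)\|^2 = $ (terms forced to vanish), i.e.\ the sectional-curvature-type expression $[K_X,K_Z]$ appearing in the Gauss equation \eqref{eqn:2.5} is pinned down. But for an affine hypersurface of constant sectional curvature $c$, comparing \eqref{eqn:2.5} with $\hat R(X,Y)Z=c[h(Y,Z)X-h(X,Z)Y]$ already gives
\[
[K_X,K_Y]Z = \tfrac12[h(Y,Z)SX - h(X,Z)SY + h(SY,Z)X - h(SX,Z)Y] - c[h(Y,Z)X - h(X,Z)Y].
\]
Feeding \emph{this} expression for $[K_X,K_Y]$ back into the reduced form of \eqref{eqn:3.1} turns the latter into a relation among $K$ and $S$ only. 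The strategy is then to diagonalize $K_e$ for a unit $e$ and use the apolarity condition ${\rm tr}\,K_X=0$ together with the symmetry of $K$ and of $K_X$ relative to $h$ to squeeze out that either $S$ is a multiple of the identity (affine hypersphere, reducing us to Theorem~\ref{thm:1.1}, whose list under constant sectional curvature gives only quadric or \eqref{eqn:1.2} with $r=0$) or $K$ collapses directly.

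The cleanest route, and the one I expect to work, is to combine Proposition-level input already available: for $n=2$ this is exactly Theorem~6.2 of \cite{HX}; for $n\geq3$, constant sectional curvature means the Weyl tensor vanishes and the metric is Einstein, so Theorem~\ref{thm:1.2}'s hypotheses are nearly met once one shows $M^n$ is an affine hypersphere. To get the affine-hypersphere property I would differentiate $\hat R\cdot C=0$ once more, or rather exploit that constant curvature plus \eqref{eqn:2.7} and \eqref{eqn:2.6} constrain $S$; alternatively, show directly via the algebraic identity above that the only eigenvalue configurations of $S$ compatible with a nonzero apolar symmetric $K$ and $c\neq0$ force $r=0$, contradicting Einstein-with-$c\neq0$. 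The main obstacle is precisely this last reduction: ruling out a non-hypersphere $M^n$ of constant sectional curvature $c\neq0$ with $\hat R\cdot C=0$ and $K\not\equiv0$. I anticipate handling it by choosing an eigenvector $e_1$ of $K_{e_1}$ with extremal eigenvalue, testing \eqref{eqn:3.1} on the triple $(e_1,e_1,e_1)$ and on $(e_1,e_j,e_j)$ for $j\geq2$, and deriving a contradiction from the resulting system together with $\sum_j (K_{e_1})_{jj}=0$; once $M^n$ is an affine hypersphere, Theorem~\ref{thm:1.2} (or Theorem~\ref{thm:1.1}) finishes the classification, and in each surviving case $r=0$, i.e.\ the metric is flat.
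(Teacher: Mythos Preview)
Your plan has two structural problems. First, invoking Theorem~\ref{thm:1.2} here is circular: in the paper, the $m=1$ case of Theorem~\ref{thm:1.2} is proved \emph{by} applying Proposition~\ref{prop:3.1}. Second, invoking Theorem~\ref{thm:1.1} is illegitimate: that theorem assumes $\hat\nabla C=0$, not merely $\hat R\cdot C=0$, and you have not shown the cubic form is actually parallel. So the ``cleanest route'' you outline---reduce to an affine hypersphere and then appeal to a classification theorem---does not stand on its own.

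More importantly, the detour through the Gauss equation, the shape operator, and the affine-hypersphere reduction is unnecessary. The very computation you save for the last paragraph (testing \eqref{eqn:3.1} on suitable basis vectors and using apolarity) is the whole proof, and it yields $K=0$ directly---not just ``affine hypersphere''. The paper does exactly this in five lines: with $\hat R(X,Y)Z=c(h(Y,Z)X-h(X,Z)Y)$, take $X=Z=U=e_i$, $Y=e_j$ (with $i\neq j$) in \eqref{eqn:3.1} and pair with $e_i$; one obtains
\[
0=h\bigl(e_i,\hat R(e_i,e_j)K_{e_i}e_i-2K_{e_i}\hat R(e_i,e_j)e_i\bigr)=3c\,\epsilon_i\,h(K_{e_i}e_i,e_j).
\]
Hence either $c=0$, or $K_{e_i}e_i=c_ie_i$ for every $i$; then apolarity gives $c_j=\mathrm{tr}\,K_{e_j}=0$, so $K_{e_i}e_i=0$ for all $i$, and by total symmetry $K\equiv0$, i.e.\ a hyperquadric by Pick--Berwald. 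No appeal to $S$, to \eqref{eqn:2.5}, or to any classification theorem is needed. (Note also that your proposed test on ``the triple $(e_1,e_1,e_1)$'' is vacuous since $\hat R(e_1,e_1)=0$.)
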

\begin{proof}
The `` if part " follows from \eqref{eqn:3.1} and the Pick-Berwald theorem.

Now, we prove the `` only if part ". Assume that the affine metric
is of constant sectional curvature $c$, then
\begin{equation}\label{eqn:3.3}
\hat{R}(X,Y)Z=c(h(Y,Z)X-h(X,Z)Y).
\end{equation}
Let us choose an orthonormal basis $\{e_1, \ldots, e_n\}$ such
that $h(e_i,e_j)=\epsilon_i\delta_{ij}$ and $\epsilon_i=\pm1$. Then,
from \eqref{eqn:3.1} we get
\begin{equation*}
h(e_i,\hat{R}(e_i,e_j)K(e_i,e_i))=2h(e_i,K(\hat{R}(e_i,e_j)e_i,e_i)),
\end{equation*}
which together with \eqref{eqn:3.3} implies that, for $i\neq j$,
\begin{equation*}
\begin{aligned}
0=h(e_i,\hat{R}(e_i,e_j)K_{e_i}e_i-2K_{e_i}\hat{R}(e_i,e_j)e_i)
=3c\epsilon_ih(K_{e_i}e_i,e_j).
\end{aligned}
\end{equation*}
Therefore, either $c=0$, or $c\neq0$ and
$K_{e_i}e_i=c_ie_i$ for all $i$. In the latter case, the apolarity
condition shows that $c_j={\rm tr}\,K_{e_j}=0$ for each $j$,
and thus $K_{e_i}e_i=0$ for all $i$. As $K$ is symmetric, we have $K=0$ identically.
By Pick-Berwald theorem the conclusion follows.
\end{proof}
\begin{remark}\label{rem:3.1}
To see the examples
whose cubic form are semi-parallel but not parallel,
we refer to Remark 6.2 in \cite{HX} for such flat surfaces,
and Theorem 4.1 in \cite{ALVW} for the flat quasi-umbilical affine hypersurfaces.
\end{remark}

From now on, let $F: M^n\rightarrow\mathbb{R}^{n+1}$, $n\geq3$, be a locally strongly
convex affine hypersurface with $\hat{R}\cdot C=0$ and vanishing Weyl curvature tensor.
Then we have \eqref{eqn:2.12}.
Denote by $\{e_1,\ldots, e_n\}$ the local orthonormal frame of $M^n$, where
$e_i$ are the eigenvector fields of the Schouten tensor $P$ with corresponding
eigenvalues $\nu_i$, $i=1,\ldots, n$.
Then, we see from \eqref{eqn:2.12} that
\begin{equation}\label{eqn:3.4}
\hat{R}(e_i,e_j)Z=(\nu_i+\nu_j)(h(e_j,Z)e_i-h(e_i,Z)e_j)
\end{equation}
for any tangent vector $Z$.
Taking $X=e_i, Y=e_j$, $W=e_k, Z=e_\ell$ in \eqref{eqn:3.1}
we have
\begin{equation}\label{eqn:3.5}
\begin{aligned}
&(\nu_i+\nu_j)(h(K_{e_j}e_k,e_\ell)e_{i}
-h(K_{e_i}e_k,e_\ell)e_{j})\\[2mm]
&\quad =(\nu_i+\nu_j)(\delta_{jk}K_{e_i}e_\ell
-\delta_{ik}K_{e_j}e_\ell+\delta_{j\ell}K_{e_i}e_k
-\delta_{i\ell}K_{e_j}e_k).
\end{aligned}
\end{equation}

For $k=\ell=i\neq j$ in \eqref{eqn:3.5} we obtain that
\begin{equation}\label{eqn:3.6}
\begin{aligned}
(\nu_i+\nu_j)(2K_{e_i}e_j +h(K_{e_i}e_i,e_j)e_{i}
-h(K_{e_i}e_i,e_i)e_{j})=0.
\end{aligned}
\end{equation}
Taking the inner product of \eqref{eqn:3.6} with $e_i$ we deduce that
\begin{equation}\label{eqn:3.7}
(\nu_i+\nu_j)h(K_{e_i}e_i,e_j)=0,\ \forall \ i\neq j.
\end{equation}
Interchanging the role of $e_i$ and $e_j$, similarly we have
\begin{equation}\label{eqn:3.8}
\begin{aligned}
(\nu_j+\nu_i)h(K_{e_j}e_j,e_i)=0,\ \forall \ i\neq j.
\end{aligned}
\end{equation}
Taking the inner product of \eqref{eqn:3.6} with $e_j$, by \eqref{eqn:3.8} we see that
\begin{equation}\label{eqn:3.9}
(\nu_i+\nu_j)h(K_{e_i}e_i,e_i)=0, \ \forall \ i\neq j.
\end{equation}
Together with \eqref{eqn:3.7} we obtain from \eqref{eqn:3.6} that
\begin{equation}\label{eqn:3.10}
(\nu_i+\nu_j)K_{e_i}e_j=0, \ \forall \ i\neq j.
\end{equation}
On the other hand, set $k=i, \ell=j$ in \eqref{eqn:3.5}, by \eqref{eqn:3.7}
and \eqref{eqn:3.8} we deduce that
\begin{equation}\label{eqn:3.11}
(\nu_i+\nu_j)(K_{e_i}e_i-K_{e_j}e_j)=0, \ \forall \ i\neq j.
\end{equation}

For more information, we denote by $\nu_1,\ldots,\nu_m$ the $m$ distinct eigenvalues for $P$
of multiplicity $(n_1,n_2,\ldots,n_m)$, respectively. Let $\mathfrak{D}(\nu_i)$
be the eigenvalue distribution of eigenvalue $\nu_i$ for $i=1,\ldots, m$.
Then, we can prove the following two lemmas.
\begin{lemma}\label{lem:3.1}
It holds that
\begin{enumerate}
\item[(i)]
If $\nu_i\neq0$ and $n_i\geq2$, then
\begin{equation*}
h(K(u,v),w)=0,\ \forall\ u,v,w\in\mathfrak{D}(\nu_i).
\end{equation*}

\item[(ii)]
If $\nu_i^2\neq\nu_j^2$, then
\begin{equation*}
K(u,v)=0, \quad \forall \ u,v\in \mathfrak{D}(\nu_i)\oplus\mathfrak{D}(\nu_j).
\end{equation*}
\end{enumerate}
\end{lemma}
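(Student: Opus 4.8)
The plan is to read off the conclusion from the algebraic identities \eqref{eqn:3.7}--\eqref{eqn:3.11} (which are themselves consequences of \eqref{eqn:3.5}), together with the apolarity condition ${\rm tr}\,K_X=0$ and the total symmetry of the cubic form $C$.

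For part (i), any two eigenvectors $e_a,e_b\in\mathfrak{D}(\nu_i)$ satisfy $\nu_a+\nu_b=2\nu_i\neq0$. Hence \eqref{eqn:3.10} gives $K_{e_a}e_b=0$ whenever $a\neq b$, \eqref{eqn:3.7} gives $h(K_{e_a}e_a,e_c)=0$ for every $e_c\in\mathfrak{D}(\nu_i)$ with $c\neq a$, and --- using that $n_i\geq2$ furnishes some $e_b\in\mathfrak{D}(\nu_i)$ with $b\neq a$ --- \eqref{eqn:3.9} gives $h(K_{e_a}e_a,e_a)=0$. By trilinearity and the total symmetry of $C$ these exhaust all values $h(K(u,v),w)$ with $u,v,w\in\mathfrak{D}(\nu_i)$, which proves (i).

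For part (ii), put $V=\mathfrak{D}(\nu_i)\oplus\mathfrak{D}(\nu_j)$ and note that $\nu_i^2\neq\nu_j^2$ is precisely the disjointness $\{\nu_i,-\nu_i\}\cap\{\nu_j,-\nu_j\}=\emptyset$; in particular $\nu_i+\nu_j\neq0$ and at most one of $\nu_i,\nu_j$ is zero. First I would dispose of the easy terms: for $e_a\in\mathfrak{D}(\nu_i)$ and $e_c\in\mathfrak{D}(\nu_j)$ one has $\nu_a+\nu_c=\nu_i+\nu_j\neq0$, so \eqref{eqn:3.10} yields $K_{e_a}e_c=0$, and the same identity with the factor $2\nu_i\neq0$ (resp.\ $2\nu_j\neq0$) kills the off-diagonal terms inside a nonzero eigenspace. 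The crux is the diagonal terms $K_{e_a}e_a$. From \eqref{eqn:3.10} the self-adjoint operator $K_{e_a}$ ($e_a\in\mathfrak{D}(\nu_i)$) annihilates every $e_c$ with $c\neq a$ and $\nu_c\neq-\nu_i$; hence it vanishes on the orthogonal complement of $W_a:=\mathbb{R}e_a+\mathfrak{D}(-\nu_i)\subseteq\mathfrak{D}(\nu_i)\oplus\mathfrak{D}(-\nu_i)$ (with the convention $\mathfrak{D}(\mu):=\{0\}$ when $\mu$ is not an eigenvalue of $P$), so $K_{e_a}e_a\in W_a$. By \eqref{eqn:3.11}, for any $e_c\in\mathfrak{D}(\nu_j)$ we get $K_{e_a}e_a=K_{e_c}e_c\in W_c\subseteq\mathfrak{D}(\nu_j)\oplus\mathfrak{D}(-\nu_j)$; since the two ambient sums are $h$-orthogonal by the disjointness above, $W_a\cap W_c=\{0\}$, forcing $K_{e_a}e_a=0$, and likewise $K_{e_c}e_c=0$.

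The only terms not yet reached are the off-diagonal ones inside a zero eigenspace, so suppose $\nu_i=0$ (whence $\nu_j\neq0$), $n_i\geq2$, and take $e_a,e_{a'}\in\mathfrak{D}(0)$ with $a\neq a'$. Substituting the indices $e_a\in\mathfrak{D}(0)$, $e_b\in\mathfrak{D}(\nu_j)$ into \eqref{eqn:3.5} --- so that the overall factor $\nu_a+\nu_b=\nu_j$ is nonzero --- and choosing the remaining two indices in $\mathfrak{D}(0)$ but different from $a$, all Kronecker terms on the right-hand side disappear and one is left with $h(K_{e_a}e_{a'},e_{a''})=0$ for every $e_{a''}\in\mathfrak{D}(0)$ with $a''\neq a$; the case $a''=a$ follows from the total symmetry of $C$ and the already-established $K_{e_a}e_a=0$. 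Since $K_{e_a}e_{a'}\in W_a=\mathfrak{D}(0)$ by the support property above, this gives $K_{e_a}e_{a'}=0$. Assembling the cross, off-diagonal and diagonal vanishings and using trilinearity then yields $K|_{V\times V}=0$, which is (ii). I expect the main obstacle to be the careful bookkeeping of the last step --- checking that the support observation together with \eqref{eqn:3.5} really does exhaust the zero-eigenspace case; everything else reduces to direct substitution into \eqref{eqn:3.7}--\eqref{eqn:3.11} and \eqref{eqn:3.5}.
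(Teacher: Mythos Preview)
Your proof is correct. Part (i) is essentially the paper's argument spelled out componentwise; the paper compresses it to the single observation $h(K(u,u),u)=0$ for every unit $u\in\mathfrak{D}(\nu_i)$ (from \eqref{eqn:3.9}, choosing $e_j\in\mathfrak{D}(\nu_j)$) followed by polarization of the symmetric trilinear form.

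For part (ii) you take a somewhat different route. The paper first shows, exactly as in (i), that $K(u,u)\perp\mathfrak{D}(\nu_i)$ and $K(v,v)\perp\mathfrak{D}(\nu_j)$; then \eqref{eqn:3.10}--\eqref{eqn:3.11} give $K(u,v)=0$ and $K(u,u)=K(v,v)\perp\mathfrak{D}(\nu_i)\oplus\mathfrak{D}(\nu_j)$ for unit $u,v$; finally, for each remaining eigenvalue $\nu_k$ at least one of $\nu_k+\nu_i,\,\nu_k+\nu_j$ is nonzero (since $\nu_i\neq\nu_j$), so \eqref{eqn:3.7} together with $K(u,u)=K(v,v)$ forces $K(u,u)\perp\mathfrak{D}(\nu_k)$, hence $K(u,u)=0$ for \emph{every} unit $u\in\mathfrak{D}(\nu_i)$, and polarization of the bilinear form $K$ finishes. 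Your ``support of $K_{e_a}$'' argument---localizing $K_{e_a}e_a$ in $W_a\subseteq\mathfrak{D}(\nu_i)\oplus\mathfrak{D}(-\nu_i)$ and intersecting with $W_c\subseteq\mathfrak{D}(\nu_j)\oplus\mathfrak{D}(-\nu_j)$---is a clean alternative that avoids the loop over all $\nu_k$. The trade-off is that because you work only with fixed basis vectors, you cannot polarize directly and must return to \eqref{eqn:3.5} for a separate treatment of the off-diagonal terms in $\mathfrak{D}(0)$; the paper's route handles $\nu_i=0$ uniformly. You could shorten your last step by noting that, since the orthonormal basis of $\mathfrak{D}(\nu_i)$ is arbitrary, the vanishing $K_{e_a}e_a=0$ in fact gives $K(u,u)=0$ for every unit $u\in\mathfrak{D}(\nu_i)$, after which polarization disposes of the zero-eigenspace case as well.
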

\begin{proof}
If $\nu_i\neq0$ and $n_i\geq2$, then, by taking $\nu_j=\nu_i$ and $e_i=u$ in \eqref{eqn:3.9} we have
$h(K(u,u),u)=0$ for any unit vector $u\in\mathfrak{D}(\nu_i)$. Then, the conclusion (i)
follows from the symmetric property of $K$.

If $\nu_i^2\neq\nu_j^2$, there hold
\begin{equation}\label{eqn:3.12}
(\nu_i+\nu_j)(\nu_i-\nu_j)\neq0,\ m\geq2.
\end{equation}
Then, by \eqref{eqn:3.9} we have $h(K(u,u),u)=0$ for any vector
$u\in\mathfrak{D}(\nu_i)$, which together with the symmetric property
of $K$ implies that $K(u,u)\notin \mathfrak{D}(\nu_i)$. Furthermore,
it follows from \eqref{eqn:3.10} and \eqref{eqn:3.11} that
\begin{equation}\label{eqn:3.13}
K(u,v)=0, \quad K(u,u)=K(v,v)\notin \mathfrak{D}(\nu_i)\oplus\mathfrak{D}(\nu_j)
\end{equation}
for any unit vectors $u\in\mathfrak{D}(\nu_i)$, $v\in\mathfrak{D}(\nu_j)$.
If $m=2$, \eqref{eqn:3.13} immediately implies that $K(u,u)=K(v,v)=0$.
If $m\geq3$, for arbitrary $\nu_k$, different from $\nu_i$ and $\nu_j$,
by \eqref{eqn:3.12} either $\nu_k+\nu_i\neq0$
or $\nu_k+\nu_j\neq0$ holds. In either case, by \eqref{eqn:3.7}
and the the arbitrariness of $\nu_k$ we see from
\eqref{eqn:3.13} that $K(u,u)=K(v,v)=0$. Therefore,
by the symmetric property of $K$ we have (ii).
\end{proof}
\begin{lemma}\label{lem:3.2}
Let $M^n$ be a locally strongly convex affine hypersurface in $\mathbb{R}^{n+1}$
with $\hat{R}\cdot C=0$ and vanishing Weyl curvature tensor.
Then,
\begin{enumerate}
\item[(i)]
The number $m$ of distinct eigenvalues of the Schouten tensor $P$
is at most $2$;

\item[(ii)]
If $m=2$, i.e., namely $\nu_1\neq\nu_2$,
then one of their multiplicity must be $1$, and
\begin{equation}\label{eqn:3.14}
\begin{aligned}
\nu_2=-\nu_1\neq0.
\end{aligned}
\end{equation}
\end{enumerate}
\end{lemma}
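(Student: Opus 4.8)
The plan is to reduce both parts to a single basic fact: if the spectrum of the Schouten tensor $P$ is too spread out, then the difference tensor $K$ vanishes identically; but $K\equiv0$ forces $M^n$ to be a hyperquadric by the Pick--Berwald theorem, hence an affine hypersphere with $S=H\,id$, and then the Gauss equation \eqref{eqn:2.5} shows $\hat R(X,Y)Z=H(h(Y,Z)X-h(X,Z)Y)$, i.e. $h$ has constant sectional curvature, so by \eqref{eqn:2.11} the operator $P$ is a multiple of the identity and $m=1$. Thus it suffices to show that $m\geq3$ cannot occur and that $m=2$ forces \eqref{eqn:3.14} together with $\min(n_1,n_2)=1$.

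\emph{Part (i), and the first assertion of (ii).} If $m=2$ with $\nu_1^2\neq\nu_2^2$, Lemma \ref{lem:3.1}(ii) applied to the pair $(\nu_1,\nu_2)$ gives $K=0$ on $\mathfrak{D}(\nu_1)\oplus\mathfrak{D}(\nu_2)$, i.e. $K\equiv0$, which is excluded by the paragraph above; hence $\nu_1^2=\nu_2^2$, so $\nu_2=-\nu_1$ (as $\nu_1\neq\nu_2$), and $\nu_1=0$ is impossible since it would give $\nu_2=\nu_1$. This proves \eqref{eqn:3.14}. If $m\geq3$, then for each $i$ there are at least two eigenvalues other than $\nu_i$ and at most one of them equals $-\nu_i$, so some $\nu_k$ has $\nu_k^2\neq\nu_i^2$; by Lemma \ref{lem:3.1}(ii), $K$ vanishes on $\mathfrak{D}(\nu_i)\oplus\mathfrak{D}(\nu_k)$, in particular $K(u,u)=0$ for every $u\in\mathfrak{D}(\nu_i)$, hence $K$ vanishes on $\mathfrak{D}(\nu_i)\times\mathfrak{D}(\nu_i)$ by polarization. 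For a mixed pair $u\in\mathfrak{D}(\nu_i)$, $v\in\mathfrak{D}(\nu_j)$ with $i\neq j$: if $\nu_i^2\neq\nu_j^2$ then $K(u,v)=0$ by Lemma \ref{lem:3.1}(ii); if $\nu_i^2=\nu_j^2$ (so $\nu_j=-\nu_i\neq0$), then using $h(K(u,v),w)=h(v,K(u,w))=h(u,K(v,w))$ for all $w$, the vanishing of $K$ on each $\mathfrak{D}(\nu_l)$, and Lemma \ref{lem:3.1}(ii) for a third eigenvalue $\nu_l$ (noting $\nu_l^2\neq\nu_i^2$), one checks that $K(u,v)$ is $h$-orthogonal to every $\mathfrak{D}(\nu_l)$, hence $K(u,v)=0$. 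So $K\equiv0$, a contradiction; thus $m\leq2$.

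\emph{Part (ii), second assertion: determining $K$.} Assume $m=2$, $\nu_2=-\nu_1\neq0$ and, for contradiction, $n_1,n_2\geq2$. Since $n_1\geq2$ one may apply \eqref{eqn:3.10} and \eqref{eqn:3.11} to two distinct basis vectors of $\mathfrak{D}(\nu_1)$; together with $\nu_1\neq0$ this gives $K(u,u')=h(u,u')\,v_1$ for all $u,u'\in\mathfrak{D}(\nu_1)$, where $v_1$ is a fixed vector, and $v_1\in\mathfrak{D}(\nu_2)$ by Lemma \ref{lem:3.1}(i); symmetrically, $K(v,v')=h(v,v')\,v_2$ on $\mathfrak{D}(\nu_2)$ with $v_2\in\mathfrak{D}(\nu_1)$. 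Next, inserting two distinct basis vectors $e_a,e_b\in\mathfrak{D}(\nu_1)$ and a vector of $\mathfrak{D}(\nu_2)$ into the semi-parallel identity \eqref{eqn:3.1} and using \eqref{eqn:3.4} yields $K(e_b,v)\in\mathrm{span}\{e_a,e_b\}\subseteq\mathfrak{D}(\nu_1)$; running the same computation with the roles of $\nu_1$ and $\nu_2$ interchanged (here $n_2\geq2$ is used) gives $K(\mathfrak{D}(\nu_1),\mathfrak{D}(\nu_2))\subseteq\mathfrak{D}(\nu_2)$ as well, hence $K(\mathfrak{D}(\nu_1),\mathfrak{D}(\nu_2))=0$. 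In particular $K_XK_Y=0$, so $[K_X,K_Y]=0$, whenever $X,Y$ lie in a single eigenspace.

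\emph{Conclusion of (ii).} Because $\nu_1+\nu_2=0$, \eqref{eqn:3.4} gives $\hat R(X,Y)Z=0$ for $X\in\mathfrak{D}(\nu_1)$, $Y\in\mathfrak{D}(\nu_2)$, while for $X,Y$ in one eigenspace $\mathfrak{D}(\nu_i)$ it gives $\hat R(X,Y)Z=2\nu_i\bigl(h(Y,Z)X-h(X,Z)Y\bigr)$. Comparing these with \eqref{eqn:2.5} and using the vanishing of the $[K,K]$-brackets, one reads off successively that $S$ preserves each $\mathfrak{D}(\nu_i)$, that $S|_{\mathfrak{D}(\nu_1)}$ and $S|_{\mathfrak{D}(\nu_2)}$ are scalar, and finally (testing $\hat R(u,u')u'=2\nu_1u$ for $u,u'\in\mathfrak{D}(\nu_1)$) that $S|_{\mathfrak{D}(\nu_1)}=2\nu_1\,id$ and $S|_{\mathfrak{D}(\nu_2)}=-2\nu_1\,id$. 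Hence $H=\tfrac1n\,\mathrm{tr}\,S=\tfrac{2\nu_1(n_1-n_2)}{n}$, whereas contracting \eqref{eqn:2.12} gives $\mathrm{Ric}(Y,Z)=(\mathrm{tr}\,P)\,h(Y,Z)+(n-2)\,h(PY,Z)$, so $r=2(n-1)\,\mathrm{tr}\,P=2(n-1)\nu_1(n_1-n_2)$ and $\chi=\tfrac{r}{n(n-1)}=H$. By \eqref{eqn:2.8} this forces $J=0$, i.e. $h(K,K)=0$, whence $K=0$, and the contradiction of the opening paragraph applies once more. Therefore $\min(n_1,n_2)=1$. The main obstacle is the middle step: squeezing the rigid forms of $K$ and then of $S$ out of \eqref{eqn:3.1} and \eqref{eqn:2.5} requires a careful choice of test vectors, but once they are in hand the final contradiction via $\chi=H+J$ is immediate.
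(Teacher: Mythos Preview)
Your argument for part (i) and for \eqref{eqn:3.14} in part (ii) is correct and essentially matches the paper's: both reduce everything to the observation that if no two eigenvalues sum to zero, then Lemma~\ref{lem:3.1}(ii) forces $K\equiv0$ and hence $m=1$ via Pick--Berwald.

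For the multiplicity assertion in (ii), however, your route diverges substantially from the paper's. The paper uses the apolarity condition: with $n_1,n_2\ge2$, Lemma~\ref{lem:3.1}(i) and \eqref{eqn:3.11} give $h(K(\mathfrak{D}(\nu_i),\mathfrak{D}(\nu_i)),\mathfrak{D}(\nu_i))=0$ and $K(v^i_p,v^i_p)=K(v^i_q,v^i_q)$; then ${\rm tr}\,K_u=0$ for $u\in\mathfrak{D}(\nu_1)$ forces $h(K(v,v),u)=0$, so $K(v,v)=0$, and symmetrically $K(u,u)=0$, whence $K\equiv0$ in two lines. Your approach instead extracts $K(\mathfrak{D}(\nu_1),\mathfrak{D}(\nu_2))=0$ from \eqref{eqn:3.1}, then passes through the Gauss equation \eqref{eqn:2.5} to determine $S$, and finally invokes $\chi=H+J$ to get $J=0$. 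This works, but it is a considerable detour; note in particular that once you have $K(u,v)=0$, the self-adjointness of $K_u$ already gives $h(v_1,v)=h(K(u,u),v)=h(u,K(u,v))=0$ for all $v\in\mathfrak{D}(\nu_2)$, so $v_1=0$ and $K\equiv0$ immediately---the entire Gauss/$S$/$J$ discussion is unnecessary.

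There is also a small gap in your Gauss-equation step: you assert that $S|_{\mathfrak{D}(\nu_i)}$ is scalar, but the same-eigenspace tests with $[K_X,K_Y]=0$ only yield $h(Su,u)+h(Su',u')=4\nu_i$ for orthonormal $u,u'$, which does not force scalarity when $n_i=2$. To get $h(Su,u')=0$ you need the mixed test $X=u$, $Y=v$, $Z=u'$ with $u'\perp u$, and there one must check that $[K_u,K_v]u'=-h(u,u')h(v,v_1)v_2=0$; you have not computed the mixed bracket, so ``using the vanishing of the $[K,K]$-brackets'' is not justified as stated. Fortunately your final conclusion only requires ${\rm tr}(S|_{\mathfrak{D}(\nu_i)})=2\nu_i n_i$, which does follow from the pairwise relation alone, so the overall argument survives.
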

\begin{proof}
First, for $m\geq2$, we claim that there must exist two distinct eigenvalues
$\nu_i$ and $\nu_j$ of $P$ such that $\nu_i+\nu_j=0$. Otherwise, we have
$\nu_i^2\neq\nu_j^2$ for any $\nu_i\neq \nu_j$,
which together with Lemma \ref{lem:3.1} (ii) implies that $K=0$ identically.
Then, the Pick-Berwald theorem implies that the hypersurface
is a hyperquadric, and thus of constant sectional curvature.
This means that $m=1$, a contradiction to $m\geq2$.

By the claim above, for $m\geq2$ we denote by $\nu_1, \nu_2$ the two
distinct eigenvalues of $P$ such that $\nu_1+\nu_2=0$,
and thus $\nu_1\nu_2\neq0$.

Next, we prove $m\leq2$. Otherwise, if $m\geq3$, then, for arbitrary $\nu_i$,
different from $\nu_1$ and $\nu_2$,
we have $(\nu_i+\nu_1)(\nu_i+\nu_2)\neq0$, and thus
\begin{equation*}
\nu_i^2\neq\nu_1^2,\quad \nu_i^2\neq\nu_2^2.
\end{equation*}
It follows from Lemma \ref{lem:3.1} (ii) that
\begin{equation}\label{eqn:3.15}
\begin{aligned}
&K(w,u)=K(w,v)=0, \\
&K(u,u)=K(v,v)=K(w,w)=0
\end{aligned}
\end{equation}
for any vectors $u\in\mathfrak{D}(\nu_1)$, $v\in\mathfrak{D}(\nu_2)$,
$w\in\mathfrak{D}(\nu_i)$. Therefore, the arbitrariness of $\nu_i$ implies
that $K(u,v)=0$. Together with \eqref{eqn:3.15}  and the symmetric property of $K$
we have $K=0$ identically. As before, it follows from the Pick-Berwald theorem
that $m=1$, a contradiction to $m\geq3$. The conclusion (i) follows.

Finally, for $m=2$, let $\nu_1,\nu_2$ be the two distinct
eigenvalues of $P$ with multiplicity $(n_1, n_2)$, respectively.
From the analysis as above we have \eqref{eqn:3.14}.
It is sufficient to prove that either $n_1=1$ or $n_2=1$ holds.
On the contrary, assume that $n_1\geq2$ and $n_2\geq2$.
Denote by $v^i_1,\ldots, v^i_{n_i}$ the orthonormal eigenvector fields
of $P$, which span $\mathfrak{D}(\nu_i)$ for $i=1,2$.
Then, as $\nu_i\neq0$ and $n_i\geq2$, by Lemma \ref{lem:3.1} (i)
and \eqref{eqn:3.11} we have
\begin{equation}\label{eqn:3.16}
\begin{aligned}
&h(K(v^i_p,v^i_q),v^i_\ell)=0,\\
&K(v^i_p,v^i_p)=K(v^i_q,v^i_q),\ \forall\ p,q, \ell
\end{aligned}
\end{equation}
for $i=1,2$. It follows from the apolarity condition that
$$
0=\sum_{\ell=1}^{n_1}h(K_{u}v^1_\ell,v^1_\ell)+
\sum_{q=1}^{n_2}h(K_{u}v^2_q,v^2_q)
=\sum_{q=1}^{n_2}h(K_{u}v^2_q,v^2_q)
=n_2h(K_{u}v^2_1,v^2_1)
$$
for any $u\in \mathfrak{D}(\nu_1)$. Therefore, $h(K_{u}v,v)=0$
for any $u\in \mathfrak{D}(\nu_1)$, $v\in \mathfrak{D}(\nu_2)$.
Similarly, we have $h(K_{v}u,u)=0$.
Then, by the symmetric property of $K$ we have $K(u,v)=0$.
Together with \eqref{eqn:3.16} we further obtain that $K=0$ identically.
As before, this is a contradiction to $m=2$. The conclusion (ii) follows.
\end{proof}

Next, based on Lemma \ref{lem:3.2}, we pay our attention to the case $m=2$.
We always denote by $\nu_1,\nu_2$ the two distinct eigenvalues of $P$ with
multiplicity $1, n-1$, respectively. Let $T$ be the unit eigenvector field of
the eigenvalue $\nu_1$, and $\{X_1,\ldots, X_{n-1}\}$ be any orthonormal
frame of $\mathfrak{D}(\nu_2)$.
By \eqref{eqn:3.14}, \eqref{eqn:3.10}, \eqref{eqn:3.11} and
Lemma \ref{lem:3.1} (i) we see that
 \begin{equation}\label{eqn:3.17}
\begin{aligned}
&K(X_i, X_i)=K(X_j, X_j), \\
&K(X_i, X_j)=0,\ \forall\ i\neq j,\\
&h(K_{X_i}X_j,X_k)=0,\ \forall\ i, j, k.
\end{aligned}
\end{equation}
It follows from the apolarity condition that
\begin{equation*}
\begin{aligned}
&h(K(T,T), X_i)=-\sum_{j=1}^{n-1}h(K(X_j,X_j),X_i)=0,\\
&h(K(T,T), T)=-\sum_{j=1}^{n-1}h(K(X_j,X_j),T)=-(n-1)h(K(X_i,X_i),T),\ \forall\ i.
\end{aligned}
\end{equation*}
Summing above, as $m=2$ and thus $K\neq0$, we can assume that
 \begin{equation}\label{eqn:3.18}
\begin{aligned}
&K_TT=\lambda_1T,\ K_TX_i=\lambda_2X_i, \ K_{X_i}X_j=\lambda_2\delta_{ij}T,\\
&PT=\nu_1T,\quad PX_i=\nu_2X_i,\ \ i, j=1,\ldots,n-1,\\
&(n-1)\lambda_2=-\lambda_1\neq0, \ \nu_2=-\nu_1\neq0.
\end{aligned}
\end{equation}
Combining the last formula with \eqref{eqn:2.11} and the fact that tr\,$Q=r$ we obtain that
\begin{equation}\label{eqn:3.19}
\begin{aligned}
&QT=0,\ QX_i=\tfrac{r}{n-1}X_{i}, \ i=1,\ldots, n-1,\\
&\nu_2=-\nu_1=\tfrac{r}{2(n-1)(n-2)}\neq0,
\end{aligned}
\end{equation}
which imply that the affine metric is quasi-Einstein with nonzero scalar curvature.

Now, we are ready to prove the following

\begin{lemma}\label{lem:3.3}
If $m=2$, then the number $\sigma$ of the distinct affine principal curvatures
is at most 2. Moreover, there exists a local orthonormal frame,
still denoted by $\{T, X_1,\ldots, X_{n-1}\}$, such that
there hold \eqref{eqn:3.18}, \eqref{eqn:3.19} and
\begin{equation}\label{eqn:3.20}
\begin{aligned}
&ST=\mu_1T,\ SX_i=\mu_{2}X_{i}, \ i=1,\ldots, n-1,\\
&\mu_{2}=2\nu_2+\lambda_2^2,\ \mu_1+\mu_{2}=-2n\lambda_2^2.
\end{aligned}
\end{equation}
\end{lemma}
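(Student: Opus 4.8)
The plan is to use that the curvature tensor $\hat R$ has two descriptions: the Gauss equation \eqref{eqn:2.5} expresses it through $S$ and $K$, while the conformal flatness identity \eqref{eqn:3.4} expresses it through the Schouten eigenvalues. Since the action of $K$ is already completely determined by \eqref{eqn:3.18}, equating the two descriptions on carefully chosen triples of the frame vectors $\{T,X_1,\dots,X_{n-1}\}$ should both pin down the behaviour of $S$ and produce the scalar relations in \eqref{eqn:3.20}.

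The first step is to show $S$ respects the splitting $\mathbb{R}T\oplus\mathfrak D(\nu_2)$. Plugging $(X,Y,Z)=(T,X_i,X_j)$ with $i\neq j$ into \eqref{eqn:2.5}: by \eqref{eqn:3.4} the left side vanishes because $\nu_1+\nu_2=0$, and from \eqref{eqn:3.18} one computes $[K_T,K_{X_i}]X_j=0$; what remains is $\tfrac12\big(h(SX_i,X_j)\,T-h(ST,X_j)\,X_i\big)=0$, forcing $h(ST,X_j)=0$ for all $j$. As $S$ is $h$-self-adjoint, this gives $ST=\mu_1T$ and $S\big(\mathfrak D(\nu_2)\big)\subseteq\mathfrak D(\nu_2)$. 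Next, I would observe that \eqref{eqn:3.17}--\eqref{eqn:3.18} are consequences of Lemma \ref{lem:3.1} and the apolarity condition that do not depend on the choice of orthonormal frame of $\mathfrak D(\nu_2)$; hence I may replace $\{X_1,\dots,X_{n-1}\}$ by a frame that in addition diagonalizes $S|_{\mathfrak D(\nu_2)}$, write $SX_i=\mu_2^{(i)}X_i$, and still have \eqref{eqn:3.18} and \eqref{eqn:3.19} at hand.

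The second step is two further substitutions into \eqref{eqn:2.5}. For $(T,X_i,X_i)$ the left side is again $0$ (as $\nu_1+\nu_2=0$), while $[K_T,K_{X_i}]X_i=(\lambda_1\lambda_2-\lambda_2^2)T$, so $\tfrac12\big(\mu_1+\mu_2^{(i)}\big)=\lambda_1\lambda_2-\lambda_2^2$; the right side being independent of $i$, all the $\mu_2^{(i)}$ agree with a common value $\mu_2$, and substituting $\lambda_1=-(n-1)\lambda_2$ yields $\mu_1+\mu_2=-2n\lambda_2^2$. For $(X_i,X_j,X_j)$ with $i\neq j$ (available since $n-1\geq 2$) the left side equals $2\nu_2 X_i$ by \eqref{eqn:3.4}, while $[K_{X_i},K_{X_j}]X_j=\lambda_2^2X_i$, giving $\mu_2=2\nu_2+\lambda_2^2$. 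Combined with $ST=\mu_1T$ and $SX_i=\mu_2X_i$ this is exactly \eqref{eqn:3.20}, while \eqref{eqn:3.19} follows by inserting \eqref{eqn:3.18} into \eqref{eqn:2.11}; since $S$ then has only the two eigenvalues $\mu_1,\mu_2$, we conclude $\sigma\leq 2$.

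The calculations themselves are routine once the right triples are selected; the point that needs care is that comparisons purely inside $\mathfrak D(\nu_2)$ do \emph{not} force the $\mu_2^{(i)}$ to be equal when $\dim\mathfrak D(\nu_2)=2$, i.e. when $n=3$, so it is essential to use the mixed triple $(T,X_i,X_i)$, which produces the relation $\tfrac12(\mu_1+\mu_2^{(i)})=\lambda_1\lambda_2-\lambda_2^2$ separately for every index $i$. A lesser technical point is the verification that the frame diagonalizing $S|_{\mathfrak D(\nu_2)}$ can be chosen so that \eqref{eqn:3.18} still holds, which rests on the frame-independence of the $K$-identities noted in the first step.
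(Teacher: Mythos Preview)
Your proof is correct and follows essentially the same strategy as the paper's: compare the Gauss equation \eqref{eqn:2.5} with the conformal-flatness expression \eqref{eqn:3.4} for $\hat R$ on well-chosen triples of frame vectors, exploiting the known $K$-structure \eqref{eqn:3.18}. One small bonus you did not use: your first substitution $(T,X_i,X_j)$ already yields $h(SX_i,X_j)=0$ for $i\neq j$ (from the $T$-component of the resulting identity), so $S$ is diagonal in the original frame and the frame-rechoosing step is in fact unnecessary.
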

\begin{proof}
From the analysis as above we still have the freedom to rechoose
the orthonormal frame of $\mathfrak{D}(\nu_2)$, such that
\eqref{eqn:3.18} and \eqref{eqn:3.19} hold.
Therefore, we can reselect the orthonormal frame of $\mathfrak{D}(\nu_2)$
if necessary, still denoted by $\{X_1,\ldots, X_{n-1}\}$, such that
\begin{equation}\label{eqn:3.21}
ST=\mu_1T+\tau_1X_1,\ SX_1=\tau_1T+\mu_2X_1+\tau_2X_2.
\end{equation}
Then, it follows from \eqref{eqn:3.18}, \eqref{eqn:3.4} and
the Gauss equation \eqref{eqn:2.5} that
 \begin{equation*}
\begin{aligned}
&0=h(\hat{R}(X_1,T)T,X_2)=\tfrac12\tau_2,\\
&0=h(\hat{R}(X_1,X_2)X_2,T)=\tfrac12\tau_1,
\end{aligned}
\end{equation*}
which together with \eqref{eqn:3.21} imply that both $\mathfrak{D}(\nu_1)$ and
$\mathfrak{D}(\nu_2)$ are the invariant subspace of $S$. Therefore,
we can rechoose the local orthonormal frame on $\mathfrak{D}(\nu_2)$,
still denoted by $\{X_1,\ldots, X_{n-1}\}$, such that
\begin{equation*}
ST=\mu_1T,\ SX_{i-1}=\mu_{i}X_{i-1}, \ i=2,\ldots, n.
\end{equation*}
Then, we see from \eqref{eqn:3.18}, \eqref{eqn:3.4} and
the Gauss equation \eqref{eqn:2.5} that
 \begin{equation*}
\begin{aligned}
&0=\nu_1+\nu_2=h(\hat{R}(X_{i-1},T)T,X_{i-1})
=\tfrac12(\mu_1+\mu_{i})+n\lambda_2^2,\ \forall\ i\geq2,\\
&2\nu_2=h(\hat{R}(X_{i-1},X_{j-1})X_{j-1},X_{i-1})
=\tfrac12(\mu_{i}+\mu_{j})-\lambda_2^2,\ \forall\ i\neq j\geq2,
\end{aligned}
\end{equation*}
which imply that
\begin{equation*}
\begin{aligned}
\mu_{i}=-\mu_1-2n\lambda_2^2=2\nu_2+\lambda_2^2,\ i=2,\ldots,n.
\end{aligned}
\end{equation*}
Therefore, we have $\sigma\leq2$ and \eqref{eqn:3.20}.
\end{proof}

Finally, we conclude this section by proving Theorem \ref{thm:1.2}.

\begin{proof}[\bf Completion of Theorem \ref{thm:1.2}'s Proof]
Let $F: M^n\rightarrow\mathbb{R}^{n+1}$, $n\geq3$, be a locally strongly
convex affine hypersphere with $\hat{R}\cdot C=0$ and vanishing Weyl curvature tensor.
Then, we see from Lemma \ref{lem:3.2} (i) that the number $m$ of distinct eigenvalues of
the Schouten tensor $P$ is at most $2$.

If $m=1$, by $W=0$ we see from \eqref{eqn:2.12} that $M^n$ is of constant sectional curvature.
Then, it follows from Proposition \ref{prop:3.1} and Main theorem of \cite{VLS}
that $M^n$ is affinely equivalent to either a hyperquadric, or
the flat and hyperbolic affine hypersphere \eqref{eqn:1.2}.

If $m=2$, based on Lemma \ref{lem:3.3}, by the fact that $H=\mu_1=\mu_2$ is constant,
we see from \eqref{eqn:3.20}, \eqref{eqn:3.19} and \eqref{eqn:2.8} that
\begin{equation}\label{eqn:3.22}
\begin{aligned}
&H=-n\lambda_2^2<0,\\
&r=n(n-1)\chi=\tfrac{(n^2-1)(n-2)}{n}H<0,\\
&J=-\tfrac{n+2}{n^2}H>0,
\end{aligned}
\end{equation}
and thus $r$ is a negative constant. Define the traceless Ricci tensor
$\tilde{Ric}=\hat{Ric}-\tfrac{r}{n}h$, we deduce from \eqref{eqn:3.19}
and \eqref{eqn:3.22} that all the components of $\tilde{Ric}$
vanish except
\begin{equation*}
\tilde{Ric}(T,T)=-\tfrac{r}{n},\
\tilde{Ric}(X_1,X_1)=\cdots=\tilde{Ric}(X_{n-1},X_{n-1})=\tfrac{r}{n(n-1)}.
\end{equation*}
Combining with \eqref{eqn:3.22} we have
\begin{equation}\label{eqn:3.23}
\|\tilde{Ric}\|_h^2=\tfrac{r^2}{n(n-1)}=-\tfrac{(n+1)(n-2)}{n+2}Jr,
\end{equation}
where $\|\cdot\|_h$ denotes the tensorial norm with respect to $h$.

In summary, $M^n$ is a hyperbolic affine hypersphere
with constant negative scalar curvature and $J\neq0$, which further
satisfies the formula \eqref{eqn:3.23}.
Then, it follows from Theorem \ref{thm:2.1} that
$M^n$ is affinely equivalent to the hyperbolic affine hypersphere \eqref{eqn:1.3}.
\end{proof}

\section{Proof of Theorem \ref{thm:1.3}}\label{sect:4}
Let $M^n$, $n\geq3$, be a locally strongly convex affine hypersurface in
$\mathbb{R}^{n+1}$ with $\hat{R}\cdot C=0$ and vanishing
Weyl curvature tensor. Denote by $m$ (resp. $\sigma$)
the number of the distinct eigenvalues of its Schouten tensor (resp. affine shape operator).
Then, by Lemma \ref{lem:3.2} (i) we have $m\leq2$.

If $m=1$, by $W=0$ we see from \eqref{eqn:2.12} that $M^n$ is of
constant sectional curvature. Then, it follows from Proposition \ref{prop:3.1}
that $M^n$ is either a hyperquadric, or a flat affine hypersurface.

\begin{remark}\label{rem:4.1}
If $M^n$ is a flat affine hypersurface with $C\neq0$, it follows from
Theorems 1.1 and 1.2 of \cite{ALVW} and Theorem \ref{thm:1.2} that
$M^n$ is affinely equivalent to either \eqref{eqn:1.2} if $\sigma=1$, or one
of the three flat quasi-umbilical affine hypersurface (1), (2), (7) explicitly
described in Theorem 4.1 of \cite{ALVW} if $\sigma=2$. However,
for $\sigma\geq3$, $M^n$ is a flat affine hypersurface with at least two
affine principal curvatures of multiplicity one, whose classification
is still complicated and not involved up to now.
\end{remark}

If $m=2$, by Lemma \ref{lem:3.3} we have $\sigma\leq2$.
For $\sigma=1$, i.e., $M^n$ is an affine hypersphere,
Theorem \ref{thm:1.2} shows that $M^n$ is affinely equivalent to \eqref{eqn:1.3}.

For the last case: $m=\sigma=2$, we see from Lemma \ref{lem:3.3} that
$M^n$ is a quasi-umbilical affine hypersurface with quasi-Einstein affine metric
with nonzero scalar curvature. Moreover, for the local orthonormal frame
$\{T, X_1,\ldots, X_{n-1}\}$ there hold
 \begin{equation}\label{eqn:4.1}
\begin{aligned}
&K_TT=\lambda_1T,\ K_TX_i=\lambda_2X_i, \ K_{X_i}X_j=\lambda_2\delta_{ij}T,\\
&ST=\mu_1T,\quad SX_i=\mu_2X_i,\ \ i, j=1,\ldots,n-1,
\end{aligned}
\end{equation}
where
 \begin{equation}\label{eqn:4.2}
\begin{aligned}
\mu_{2}-\lambda_2^2=\tfrac{r}{(n-1)(n-2)}\neq0,\qquad &\mu_1+\mu_{2}=-2n\lambda_2^2,\\
\lambda_1=-(n-1)\lambda_2\neq0,\qquad &\mu_1-\mu_{2}\neq0.
\end{aligned}
\end{equation}
By \eqref{eqn:3.18} and \eqref{eqn:3.19} we further obtain from \eqref{eqn:2.12} that
 \begin{equation}\label{eqn:4.3}
\begin{aligned}
&\hat{R}(X_i, X_j)X_k=\tfrac{r}{(n-1)(n-2)}(\delta_{jk}X_i-\delta_{ik}X_j),\\
&\hat{R}(X_i, T)T=\hat{R}(T, X_i)X_i=0,\  \forall\ 1\leq i, j, k\leq n-1.
\end{aligned}
\end{equation}

\begin{remark}\label{rem:4.2}
For $\sigma=m=2$, $M^n$ is affinely equivalent to one of the three classes
of immersions in Theorem \ref{thm:2.2}. In fact, it follows from
\eqref{eqn:4.1} that $M^n$ satisfies the conditions of Theorem \ref{thm:2.2}.
In the proof of Theorem \ref{thm:2.2} in \cite{ADSV}, it was shown in Lemma 3 that
if $\lambda_2=0$, then $K_T=0$ and $M^n$ is an affine hypersphere.
In our situation, by $M^n$ being not an affine hypersphere
we can exclude this possibility in Theorem \ref{thm:2.2}.
\end{remark}

Next, we show more information about the three classes of warped product
immersions in Theorem \ref{thm:2.2}.
Denote by $\mathfrak{D}(\mu_2)=$span$\{X_1,\ldots, X_{n-1}\}$
the eigenvalue distribution of $S$ corresponding to $\mu_2$.
In the proof of Theorem \ref{thm:2.2} in \cite{ADSV}, by \eqref{eqn:4.2}
it was shown that
\begin{equation}\label{eqn:4.4}
\begin{aligned}
&\hat{\nabla}_{T}T=0,\ \hat{\nabla}_{X}T=-\alpha X,\ T(\alpha)=\alpha^2,\\
&X(\alpha)=X(\mu_1)=X(\mu_2)=X(\lambda_2)=0,\ \forall \ X\in\mathfrak{D}(\mu_2),\\
&T(\lambda_2)=(n+1)\lambda_2\alpha+\tfrac12(\mu_1-\mu_2),\\
&T(\mu_2)=(\mu_2-\mu_1)(\alpha-\lambda_2),
\end{aligned}
\end{equation}
and $M^n$ is locally a warped product $\mathbb{R} \times _{f}M_2$, where
the warping function $f$ is determined by $\alpha=-T(\ln f)$,
$\mathbb{R}$ and $M_2$ are, respectively, integral manifolds of the
distributions span$\{T\}$ and $\mathfrak{D}(\mu_2)$,
and $M_2$ is an affine hypersphere.
The same proof implies that the projection of the difference tensor
on $\mathfrak{D}(\mu_2)$ is the difference tensor
of the components $M_2$, given by
\begin{equation}\label{eqn:4.5}
L^2(X,Y)=K(X,Y)-\lambda_2h(X,Y)T,\
\forall\ X, Y\in\mathfrak{D}(\mu_2).
\end{equation}
Then, it follows from \eqref{eqn:4.1} that $L^2=0$.
Hence $M_2$ is a hyperquadric with constant sectional curvature $c$.

By the warped product structure, if no other stated,
we always take the local coordinates $\{t,x_1,\cdots,x_{n-1}\}$ on $M^n$ such that
$\tfrac{\partial}{\partial t}=T$, span$\{\tfrac{\partial}{\partial
x_1},\cdots,\tfrac{\partial}{\partial x_{n-1}}\}=\mathfrak{D}(\mu_2)$.
Then all functions $\mu_i, \lambda_i, \alpha, r$ and $f$ depend only on $t$.
Denote by $\partial_t()=(\cdot)'$, we have $\alpha=-f'/f$.
It follows from \eqref{eqn:2.13} and \eqref{eqn:4.3} that
\begin{equation}\label{eqn:4.6}
c=f'^2+\tfrac{r}{(n-1)(n-2)}f^2.
\end{equation}
Furthermore, we can solve from the equations above for $f$ and $\alpha$ to get that,
up to a translation and a direction of the parametric $t$,
\begin{equation}\label{eqn:4.7}
f=1,\ \alpha=0;\  {\rm or} \ f=t,\ \alpha=-\tfrac{1}{t},
\end{equation}
where locally we take $t>0$. Together with \eqref{eqn:4.2} and \eqref{eqn:4.4} we
can check that $(\mu_2-\lambda_2^2)'=0$ if $f=1$,
and $((\mu_2-\lambda_2^2)t^2)'=0$ if $f=t$.
It follows from \eqref{eqn:4.2} and \eqref{eqn:4.6} that
\begin{equation}\label{eqn:4.8}
c= \left\{
\begin{array}{lll}
\tfrac{r}{(n-1)(n-2)}=\mu_2-\lambda_2^2,\ &{\rm if} \ f=1,\\[2mm]
1+\tfrac{r}{(n-1)(n-2)}t^2 =1+(\mu_2-\lambda_2^2)t^2,  &{\rm if} \ f=t,
\end{array}
\right.
\end{equation}
where the scalar curvature $r$ is nonzero in either case.

The Pick-Berwald theorem states that the cubic form or difference tensor vanishes,
if and only if the hypersurface is a non-degenerate hyperquadric.
For locally strongly convex case, for later use we recall from \cite{LSZH} on
pages 104-105 the following three hyperquadrics revised of dimension $n-1$:
\begin{enumerate}
\item[(1)] The ellipsoid, described in $\mathbb{R}^{n}$ by
$$
x_1^2+\cdots+x_n^2=c^{-(n+1)/n}, \ c>0.
$$
It has constant sectional curvature $c$ and the affine normal $-cF_1$,
where in local coordinates we denote this immersion by $F_1(x_1,\ldots,x_{n-1})$.

\item[(2)] The hyperboloid, described in $\mathbb{R}^{n}$ by
$$
x_n=(x_1^2+\cdots+x_{n-1}^2+(-c)^{-(n+1)/n})^{1/2},\ c<0.
$$
It has constant sectional curvature $c$ and the affine normal $-cF_2$,
where in local coordinates we denote this immersion by $F_2(x_1,\ldots,x_{n-1})$.

\item[(3)] The elliptic paraboloid, described in $\mathbb{R}^{n}$ by
$$
x_n=\tfrac12(x_1^2+\cdots+x_{n-1}^2).
$$
The affine metric is flat. This is the only parabolic affine hypersphere
with constant sectional curvature.
\end{enumerate}

Finally, armed with the above preparations, by Remark \ref{rem:4.2}
and the computations in \cite{ALVW} for the three classes of immersions in
Theorem \ref{thm:2.2}, we can prove the following theorem, which together
with previous cases completes the proof of Theorem \ref{thm:1.3}.

\begin{theorem}\label{thm:4.1}
Let $M^n$, $n\geq3$, be a locally strongly convex affine hypersurface in
$\mathbb{R}^{n+1}$ with $\hat{R}\cdot C=0$ and vanishing Weyl curvature tensor.
Assume that both the numbers of the distinct eigenvalues of its Schouten tensor
and affine shape operator are $2$. Then $M^n$ is a quasi-umbilical affine hypersurface
with quasi-Einstein metric and nonzero scalar curvature $r$. Moreover, $(M^n,h)$ is locally
isometric to the warped product $\mathbb{R_+} \times _{f}M_2$, where
the warped function $f(t)=1$ or $t$, $M_2$ is of constant sectional curvature $c$,
and $M^n$ is affinely equivalent to one of the following six hypersurfaces:
\begin{enumerate}
\item[(1)]  The immersion $(\gamma_1(t), \gamma_2(t)F_1(x_1,\ldots,x_{n-1}))$, where
\begin{enumerate}
\item[(a)] $f(t)=1$, and $c=\tfrac{r}{(n-1)(n-2)}$ is positive constant,

\item[(b)] $\gamma_1$ and $\gamma_2$ are determined by
\begin{equation*}
\begin{aligned}
&\gamma_1'=\gamma_2^{-n},\\
&\gamma_2=(c_1\cos(\sqrt{(n+1)c}t)+c_2\sin(\sqrt{(n+1)c}t))^{\tfrac{1}{n+1}},
\end{aligned}
\end{equation*}
where $c_1, c_2$ are constants such that $\gamma_2>0$.
\end{enumerate}

\item[(2)] The immersion $(\gamma_1(t), \gamma_2(t)F_2(x_1,\ldots,x_{n-1}))$, where
\begin{enumerate}
\item[(a)] $f(t)=1$, and $c=\tfrac{r}{(n-1)(n-2)}$ is negative constant,

\item[(b)] $\gamma_1$ and $\gamma_2$ are determined by
\begin{equation*}
\begin{aligned}
&\gamma_1'=\gamma_2^{-n},\\
&\gamma_2=(c_1e^{\sqrt{-(n+1)c}t}+c_2e^{-\sqrt{-(n+1)c}t})^{\tfrac{1}{n+1}},
\end{aligned}
\end{equation*}
where $c_1, c_2$ are constants such that $\gamma_2>0$.
\end{enumerate}

\item[(3)] The immersion $(\gamma_1(t), \gamma_2(t)F_1(x_1,\ldots,x_{n-1}))$, where
\begin{enumerate}
\item[(a)] $f(t)=t$, and $c=1+\tfrac{r}{(n-1)(n-2)}t^2\neq1$ is positive constant,

\item[(b)] $\gamma_1$ and $\gamma_2$ are determined by
\begin{equation*}
\gamma_1'=t^{n+1}\gamma_2^{-n},\
\gamma_2=k(t)^{1/(n+1)},
\end{equation*}
where $k(t)$ is a positive solution to the linear differential equation
\begin{equation*}
t^2k''(t)-(n+1)tk'(t)+(n+1)ck(t)=0.
\end{equation*}
\end{enumerate}

\item[(4)] The immersion $(\gamma_1(t), \gamma_2(t)F_2(x_1,\ldots,x_{n-1}))$, where
\begin{enumerate}
\item[(a)] $f(t)=t$, and $c=1+\tfrac{r}{(n-1)(n-2)}t^2$ is negative constant,

\item[(b)] $\gamma_1$ and $\gamma_2$ are determined by
\begin{equation*}
\gamma_1'=t^{n+1}\gamma_2^{-n},\
\gamma_2=k(t)^{1/(n+1)},
\end{equation*}
where $k(t)$ is a positive solution to the linear differential equation
\begin{equation*}
t^2k''(t)-(n+1)tk'(t)+(n+1)ck(t)=0.
\end{equation*}
\end{enumerate}

\item[(5)] The immersion
$(\gamma_1(t)x_1,\ldots,\gamma_1(t)x_{n-1},\tfrac12\gamma_1(t)\sum_{i=1}^{n-1}x_i^2
+\gamma_2(t),\gamma_1(t))$,
where
\begin{enumerate}
\item[(a)] $f(t)=t$, $c=1+\tfrac{r}{(n-1)(n-2)}t^2=0$,

\item[(b)] $\gamma_1$ and $\gamma_2$ are determined by
\begin{equation*}
\begin{aligned}
\gamma_1=(\tfrac{n+1}{n+2} t^{n+2}+c_1)^{\tfrac{1}{n+1}},\
\gamma_2'=\tfrac{n+1}{n+2}\gamma_1'\ln t-\tfrac{\gamma_1}{(n+2)t},
\end{aligned}
\end{equation*}
where $c_1$ is a constant.
\end{enumerate}

\item[(6)] The immersion $(x_1,\ldots,x_{n-1},
\tfrac12\sum_{i=1}^{n-1}x_i^2-\tfrac{1}{n+2}\ln t,\tfrac{1}{n+2}t^{n+2})$,
where $f(t)=t$, $c=1+\tfrac{r}{(n-1)(n-2)}t^2=0$.
\end{enumerate}
\end{theorem}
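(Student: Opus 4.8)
The plan is to combine the structural information obtained earlier in this section with the classification of immersions in Theorem~\ref{thm:2.2} and the explicit computations of \cite{ALVW}, and then to solve the resulting ordinary differential equations. First I recall from Lemma~\ref{lem:3.3} and the discussion leading to \eqref{eqn:4.1} that, since $m=\sigma=2$, the hypersurface $M^n$ is quasi-umbilical with quasi-Einstein affine metric of nonzero scalar curvature $r$, and that there is a local orthonormal frame $\{T,X_1,\dots,X_{n-1}\}$ satisfying \eqref{eqn:4.1}--\eqref{eqn:4.3}. By Remark~\ref{rem:4.2}, $M^n$ fulfils the hypotheses of Theorem~\ref{thm:2.2}, and since $\lambda_2\neq0$ it is not an affine hypersphere, so $M^n$ is affinely congruent to one of the classes $(1)$, $(2)$, $(3)$ there. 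Moreover, as established in deriving \eqref{eqn:4.4}--\eqref{eqn:4.8}, $(M^n,h)$ is locally the warped product $\mathbb{R}_+\times_f M_2$ in which $M_2$ is an affine hypersphere whose difference tensor $L^2$ vanishes by \eqref{eqn:4.5}, hence a hyperquadric of constant sectional curvature $c$; after reparametrizing the base one has either $f\equiv1,\ \alpha=0$ or $f(t)=t,\ \alpha=-1/t$, with $c$ tied to $r$ through \eqref{eqn:4.8}.

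Next I would organize the discussion by the three ambient forms of Theorem~\ref{thm:2.2}, equivalently by whether $M_2$ is proper ($c\neq0$) or improper ($c=0$). If $c\neq0$ then $M_2$ is a proper affine hypersphere, hence the ellipsoid $F_1$ when $c>0$ or the hyperboloid $F_2$ when $c<0$, and $M^n$ lies in class $(1)$, i.e.\ $F=(\gamma_1(t),\gamma_2(t)F_i(x_1,\dots,x_{n-1}))$. If $c=0$ then $M_2$ is the elliptic paraboloid, the unique parabolic affine hypersphere of constant curvature, so $M^n$ lies in class $(2)$ or $(3)$, and \eqref{eqn:4.8} forces $f(t)=t$. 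Thus $f\equiv1$ gives the subcases that become $(1)$ and $(2)$; $f(t)=t$ with $c\neq0$ gives $(3)$ and $(4)$, where in the ellipsoid case $c\neq1$ since $c=1$ would force $r=0$ in \eqref{eqn:4.8}; and $f(t)=t$ with $c=0$ gives $(5)$ from class $(2)$ and $(6)$ from class $(3)$.

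Finally, for each of the six configurations I would write the immersion coming from the relevant class of Theorem~\ref{thm:2.2} and impose the remaining constraints: that the indicated transversal field is the affine normal, the warped-product identity $\alpha=-f'/f$ from \eqref{eqn:4.4}, the constant-curvature relation \eqref{eqn:4.6} for $M_2$, and the companion identities of \eqref{eqn:4.4} for $\mu_i,\lambda_2,\alpha$. Following the computations of \cite{ALVW}, these reduce to ODEs for $\gamma_1,\gamma_2$ of the type $\gamma_1'=\gamma_2^{-n}$ (resp.\ $\gamma_1'=t^{n+1}\gamma_2^{-n}$) together with a second-order linear equation for $k=\gamma_2^{n+1}$, which in the $f(t)=t$ cases is the Euler-type equation $t^2k''-(n+1)tk'+(n+1)ck=0$; integrating these yields precisely the explicit formulas in $(1)$--$(6)$.

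I expect the main obstacle to be exactly this last step: normalizing the parametrization of the base, extracting the correct ODE from the affine normalization in each of the three ambient forms of Theorem~\ref{thm:2.2}, solving the resulting equations in closed form (trigonometric, exponential, or power-type solutions of the linear equation according to the sign of $c$), and treating the borderline subcases $(5)$ and $(6)$ at $c=0$ separately, where the logarithmic terms appear.
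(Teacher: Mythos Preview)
Your proposal is correct and follows essentially the same approach as the paper: reduce to Theorem~\ref{thm:2.2} via the structural Lemmas of Section~\ref{sect:3} and \eqref{eqn:4.1}--\eqref{eqn:4.8}, then in each of the three ambient forms invoke the computations of \cite{ALVW} to obtain and solve the ODEs for $\gamma_1,\gamma_2$. The only cosmetic difference is that the paper splits into the three cases of Theorem~\ref{thm:2.2} using the conditions $\mu_2-\lambda_2^2+\alpha^2\neq0$ versus $=0$ and $\mu_2^2+(\alpha-\lambda_2)^2\neq0$ versus $=0$ (equations \eqref{eqn:4.9}, \eqref{eqn:4.14}, \eqref{eqn:4.16}), whereas you organize first by the sign of $c$; since \eqref{eqn:4.7}--\eqref{eqn:4.8} give $\mu_2-\lambda_2^2+\alpha^2=c$ when $f=1$ and $=c/t^2$ when $f=t$, the two organizations coincide.
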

\begin{remark}\label{rem:4.3}
By Theorem \ref{thm:2.3}, the warped product structure in Theorem \ref{thm:4.1}
implies that all examples above, especially for $3$-dimensional ones, are conformally flat.
\end{remark}
\begin{proof}
Based on the previous analysis, we will discuss the three classes
of immersions in Theorem \ref{thm:2.2} by taking $m=n-1$, respectively.

\vskip 2mm
{\bf Case} I. It was shown in \cite{ADSV} that if
\begin{equation}\label{eqn:4.9}
\mu_2^2+(\alpha-\lambda_2)^2\neq0,\ \mu_2-\lambda_2^2+\alpha^2\neq 0,
\end{equation}
$M^n$ is affinely equivalent to the immersion
\begin{equation*}
(\gamma_1(t), \gamma_2(t)g_2(x_1,\ldots,x_{n-1})),
\end{equation*}
where $g_2:M_2\rightarrow\mathbb{R}^{n}$ is a proper affine hypersphere with
the difference tensor $L^2$ defined by \eqref{eqn:4.5}. It follows from the analysis above,
\eqref{eqn:4.7}-\eqref{eqn:4.9} that $L^2=0$, $g_2$ has constant sectional
curvature $c\neq0$, affine mean curvature $c$ and affine normal $-cg_2$.
Thus $g_2=F_1$ if $c>0$, or $g_2=F_2$ if $c<0$. As $r\neq0$,
\eqref{eqn:4.8} implies that $c\neq1$ if $f=t$.
Moreover, by the computations of this immersion on page 292-294 in \cite{ALVW}
we take $\lambda=-c$ in (4.3) of \cite{ALVW}, and deduce that
\begin{equation}\label{eqn:4.10}
\begin{aligned}
&\gamma_1'=f^{n+1}\gamma_2^{-n},\ \gamma_2=k(t)^{1/(n+1)},\\
&f^2k''(t)-(n+1)ff'k'(t)+(n+1)ck(t)=0,
\end{aligned}
\end{equation}
where $k(t)$ and thus $\gamma_2$ are positive functions.

If $f=1$, we see from \eqref{eqn:4.10} that $\gamma_1'=\gamma_2^{-n}$
and $\gamma_2=k(t)^{1/(n+1)}$, where
$$
k''(t)+(n+1)ck(t)=0.
$$
Solving this equation we obtain that
\begin{equation}\label{eqn:4.11}
\gamma_2=
\left\{
\begin{array}{lll}
(c_1e^{\sqrt{-(n+1)c}t}+c_2e^{-\sqrt{-(n+1)c}t})^{\tfrac{1}{n+1}}, &{\rm if}\  c<0,\\[2mm]
(c_1\cos(\sqrt{(n+1)c}t)+c_2\sin(\sqrt{(n+1)c}t))^{\tfrac{1}{n+1}}, &{\rm if}\  c>0,
\end{array}
\right.
\end{equation}
where the constants $c_1, c_2$ are chosen such that $\gamma_2>0$.
We obtain the immersion (1) if $c>0$, and the the immersion (2) if $c<0$.

If $f=t$, we see from \eqref{eqn:4.2}, \eqref{eqn:4.8} and
\eqref{eqn:4.10} that
$\gamma_1'\gamma_2^{n}=t^{n+1}$ and
\begin{equation}\label{eqn:4.12}
\begin{aligned}
&\gamma_2=k(t)^{1/(n+1)},\ c\neq1,\\
&t^2k''(t)-(n+1)tk'(t)+(n+1)ck(t)=0.
\end{aligned}
\end{equation}
In particular, if $k(t)$ is a power function of $t$,
we can solve this equation to obtain
\begin{equation}\label{eqn:4.13}
\gamma_2(t)=
\left\{
\begin{array}{lll}
c_1t^{\tfrac{n+2}{2(n+1)}},  &{\rm if} \ c=\tfrac{(n+2)^2}{4(n+1)},\\[2mm]
(c_2t^{\tau_1}+c_3t^{\tau_2})^{\tfrac{1}{n+1}},
&{\rm if} \ c<\tfrac{(n+2)^2}{4(n+1)},\\[2mm]
0, &{\rm if} \ c>\tfrac{(n+2)^2}{4(n+1)},
\end{array}
\right.
\end{equation}
where $c_1$ is a positive constant, the constants $c_2, c_3$ are chosen such that $\gamma_2>0$,
and $\tau_1, \tau_2$ are the solutions of the quadric equation
$\tau^2-(n+2)\tau+(n+1)c=0$. We obtain the immersion (3) if $c>0$,
and the immersion (4) if $c<0$.

\vskip 3mm
{\bf Case} II. It was shown in \cite{ADSV} that if
\begin{equation}\label{eqn:4.14}
\mu_2^2+(\alpha-\lambda_2)^2\neq0,\ \mu_2-\lambda_2^2+\alpha^2= 0,
\end{equation}
$M^n$ is affinely equivalent to the immersion
\begin{equation*}
\begin{aligned}
(\gamma_1(t)x_1,\ldots, \gamma_1(t)x_{n-1},
\gamma_1(t)g(x_1,\ldots,x_{n-1})+\gamma_2(t),\gamma_1(t)),
\end{aligned}
\end{equation*}
where $g(x_1,\ldots,x_{n-1})$ is a convex function whose
graph immersion is a parabolic affine hypersphere with the
difference tensor $L^2$ defined by \eqref{eqn:4.5}.
It follows from the analysis above, \eqref{eqn:4.7}, \eqref{eqn:4.8}
and \eqref{eqn:4.14} that $L^2=0$, $f=t$, $c=0$ and this parabolic affine hypersphere
has a flat affine metric, and thus $g(x_1,\ldots,x_{n-1})=\tfrac12(x_1^2+\cdots+x_{n-1}^2)$.

Moreover, we recall from the computations of such hypersurfaces on
page 294 of \cite{ALVW} that $\gamma_1,\gamma_2$ satisfy
\begin{equation}\label{eqn:4.15}
(\gamma_1'\gamma_2''-\gamma_1''\gamma_2')f^2=\gamma_1\gamma_1',\
f=|\gamma_1^n\gamma_1'|^{1/(n+1)}.
\end{equation}
As $f=t$, we have
$(\gamma_1^{n+1})'=\varepsilon(n+1)t^{n+1}$, $\varepsilon\in\{-1,1\}$,
which gives that $\gamma_1^{n+1}=\tfrac{n+1}{n+2}\varepsilon t^{n+2}+c_1$.
By applying an affine reflection we may assume that $\gamma_1>0$,
then put $\varepsilon=1$ and $\gamma_1=(\tfrac{n+1}{n+2} t^{n+2}+c_1)^{1/(n+1)}$.
By \eqref{eqn:4.15} we get
$$
(\gamma_2'/\gamma_1')'=t^{-2}\gamma_1/\gamma_1'
=\tfrac{n+1}{n+2}t^{-1}+ c_1t^{-n-3},
$$
which yields that
$$
\gamma_2'/\gamma_1'=\tfrac{n+1}{n+2}\ln t
-\tfrac{ c_1}{(n+2)t^{n+2}}+c_2.
$$
Then, since $\gamma_1^{n}\gamma_1'= t^{n+1}$
and $c_1=\gamma_1^{n+1}-\tfrac{n+1}{n+2} t^{n+2}$,
we have
$$
\gamma_2'=\tfrac{n+1}{n+2}\gamma_1'\ln t
-\tfrac{\gamma_1}{(n+2)t}+(\tfrac{n+1}{(n+2)^2}+c_2)\gamma_1',
$$
and
$$
\gamma_2=\int\big(\tfrac{n+1}{n+2}\gamma_1'\ln t
-\tfrac{\gamma_1}{(n+2)t}\big)dt
+(\tfrac{n+1}{(n+2)^2}+c_2)\gamma_1+c_3.
$$
Here, by applying equiaffine transformations
we may put $c_2=-(n+1)/(n+2)^2$ and $c_3=0$.
We have the immersion (5).

{\bf Case} III. It was shown in \cite{ADSV} that if
\begin{equation}\label{eqn:4.16}
\mu_2^2+(\alpha-\lambda_2)^2=0,
\end{equation}
$M^n$ is affinely equivalent to the immersion
\begin{equation*}
\begin{aligned}
(x_1,\ldots,x_{n-1}, g(x_1,\ldots,x_{n-1})+\gamma_1(t),\gamma_2(t)),
\end{aligned}
\end{equation*}
where $g(x_1,\ldots,x_{n-1})$ is a convex function whose graph immersion
is a parabolic affine hypersphere with the difference tensor $L^2$ defined by \eqref{eqn:4.5}.
It follows from the analysis above, $\lambda_2\neq0$ in \eqref{eqn:4.2},
\eqref{eqn:4.7}, \eqref{eqn:4.8} and \eqref{eqn:4.16} that $L^2=0$, $\mu_2=0$,
$\lambda_2=\alpha=-\tfrac{1}{t}$, $f=t$ and $c=0$.
As before $g(x_1,\ldots,x_{n-1})=\tfrac12(x_1^2+\cdots+x_{n-1}^2)$.

Moreover, we see from the computations of such hypersurfaces on
page 295 in \cite{ALVW} that $\gamma_1,\gamma_2$ satisfy
\begin{equation}\label{eqn:4.17}
\begin{aligned}
\gamma_2'^3=(\gamma_1''\gamma_2'-\gamma_1'\gamma_2'')f^{2(n+2)},\
f=\mid\gamma_2'\mid^{1/(n+1)}.
\end{aligned}
\end{equation}
Then, taking $f=t$ into this equation we deduce that
\begin{equation}\label{eqn:4.18}
\gamma_2'=\epsilon t^{n+1},\ t^2\gamma_1''-(n+1)t\gamma_1'-1=0,
\end{equation}
where $\epsilon\in\{-1,1\}$.
Then, we can directly solve these equations to obtain
$$
\gamma_2(t)=\tfrac{\epsilon}{n+2}t^{n+2}+c_1,\
\gamma_1(t)=-\tfrac{\ln t}{n+2}+c_2t^{n+2}+c_3.
$$
By applying a translation and a reflection in $\mathbb{R}^{n+1}$
we may assume that $c_1=c_3=0$, and $\gamma_2>0$, i.e., $\epsilon=1$. Also,
by possibly applying an equiaffine transformation
we may put $c_2=0$. Hence, we obtain the immersion (6).
\end{proof}

\vskip5mm

\end{document}